\documentclass{svproc}

 \usepackage[utf8]{inputenc}
 \usepackage[T1]{fontenc}
 \usepackage{lmodern}
 \usepackage{amsmath}
 \usepackage{amssymb}
 \usepackage{hyperref}

 \renewcommand{\d}{\mathrm d}

\begin{document}
\mainmatter

\title{Long-time dynamics for a simple aggregation equation on the sphere}

\author{Amic Frouvelle\inst{1} \and Jian-Guo Liu\inst{2}}
\institute{CEREMADE, CNRS, Université Paris-Dauphine, Université PSL, 75016 Paris, France, \email{frouvelle@ceremade.dauphine.fr} \and Department of Physics and Department of Mathematics, Duke University, Durham, NC 27708, USA, \email{jliu@phy.duke.edu}}
\maketitle

\begin{abstract}
We give a complete study of the asymptotic behavior of a simple model of alignment of unit vectors, both at the level of particles, which corresponds to a system of coupled differential equations, and at the continuum level, under the form of an aggregation equation on the sphere. We prove unconditional convergence towards an aligned asymptotic state. In the cases of the differential system and of symmetric initial data for the partial differential equation, we provide precise rates of convergence.
\keywords{alignment, unit vectors, aggregation equation}
\end{abstract}

\section{Introduction and main results}

We are interested in a model of alignment of unit vectors. Our interest comes from the mechanism of alignment of self-propelled particles presented by Degond and Motsch in~\cite{degond2008continuum}, which is a time-continuous model inspired from the Vicsek model~\cite{vicsek1995novel} (in which the alignment process is discrete in time). In these models, the velocities of the particles, considered as unit vectors, try to align towards the average orientation of their neighbors and are subject to some angular noise. We want to study the simple case without spatial dependence and without noise. More precisely, at the level of the particle dynamics, we consider the deterministic part of the spatially homogeneous model of~\cite{bolley2012meanfield}, which corresponds to a regularized version of~\cite{degond2008continuum}: the particles align with the average velocity of the others (instead of dividing this average vector by its norm to get a averaged orientation). It reads as
\begin{equation}
  \label{ode-intro}\frac{\d v_i}{\d t}=P_{v_i^\perp}J, \quad\text{ with } J=\frac1{N}\sum_{j=1}^Nv_j,
\end{equation}
where~$(v_i)_{1\leqslant i\leqslant N}$ are~$N$ unit vectors belonging to~$\mathbb{S}$, the unit sphere of~$\mathbb{R}^n$, and~$P_{v^\perp}$ is the projection on the orthogonal of a unit vector~$v\in\mathbb{S}$, given by~$P_{v^\perp}u=u-(v\cdot u) v$ for~$u\in\mathbb{R}^n$. This projection ensures that the velocities stay of norm one for all positive times. This system of equations can be seen as alignment towards the unit vector pointing in the same direction as~$J$ (the average of all velocities). Indeed the term~$P_{v^\perp}J$ is equal to~$\nabla_v(J\cdot v)$, where~$\nabla_v$ is the gradient operator on the unit sphere~$\mathbb{S}$. Therefore the dynamics of a particle following the equation~$\frac{\d v}{\d t}=\nabla_v(v\cdot J)$ corresponds to the maximization of this quantity~$v\cdot J$, which is maximal when~$v$ is aligned in the same direction as~$J$.

At the kinetic level, we are interested in the evolution of a probability measure~$f(t,\cdot)$ on~$\mathbb{S}$ given by
\begin{equation}
  \label{pde-intro}
  \partial_tf+\nabla_v\cdot(fP_{v^\perp}J_f)=0,\quad\text{ with } J_f=\int_\mathbb{S}v f \d v,
\end{equation}
where~$\nabla_v\cdot$ is the divergence operator on the sphere~$\mathbb{S}$. The link between this evolution equation and the system of ordinary differential equations~\eqref{ode-intro}, is that if the measure~$f$ is the so-called empirical distribution of the particles~$(v_i)_{1\leqslant i\leqslant N}$, given by~$f=\frac1{N}\sum_{i=1}^N\delta_{v_i}$, then it is a weak solution of the kinetic equation~\eqref{pde-intro} if and only if the vectors~$(v_i)_{1\leqslant i\leqslant N}$ are solutions of the system~\eqref{ode-intro} (see Remark~\ref{remark-ode-pde}). This kinetic equation~\eqref{pde-intro} corresponds to the spatially homogeneous version of the mean-field limit of~\cite{bolley2012meanfield} in which the diffusion coefficient has been set to zero. The case with a positive diffusion has been treated in detail in~\cite{frouvelle2012dynamics} by the authors of the present paper, and it presents a phenomenon of phase transition: when the diffusion coefficient is greater than a precise threshold, all the solutions converge exponentially fast towards the uniform measure on the sphere~$\mathbb{S}$, and when it is smaller, all solutions except those for which~$J_f$ is initially zero converge exponentially fast to a non-isotropic steady-state (a von Mises distribution). When the diffusion coefficient tends to zero, the von Mises distributions converge to Dirac measures concentrated at one point of~$\mathbb{S}$. Therefore, we can expect that the solutions of~\eqref{pde-intro} converge to a Dirac measure. The main object of this paper is to make this statement precise, in proving the following theorem:

\begin{theorem}\label{thm-pde}
Let~$f_0$ be a probability measure on~$\mathbb{S}$ of~$\mathbb{R}^n$, and~$f\in C(\mathbb{R}_+,\mathcal{P}(\mathbb{S}))$ be the solution of~\eqref{pde-intro} with initial condition~$f(0,v)=f_0(v)$.

If~$J_f(0)\neq0$, then~$t\mapsto|J_f(t)|$ is nondecreasing, so~$\Omega(t)=\frac{J_f(t)}{|J_f(t)|}\in\mathbb{S}$ is well-defined for all times~$t\geqslant0$. Furthermore there exists~$\Omega_\infty\in\mathbb{S}$ such that~$\Omega(t)$ converges to~$\Omega_\infty(t)$ as~$t\to+\infty$.

Finally, there exists a unique~$v_{\mathrm{back}}\in\mathbb{S}$ such that the solution of the differential equation~$\frac{\d v}{\d t}=P_{v^\perp}J_f(t)$ with initial condition~$v(0)=v_{\mathrm{back}}$ is such that~$v(t)\to-\Omega_\infty$ as~$t\to\infty$. Then, if we denote by~$m$ the mass of the singleton~$\{v_{\mathrm{back}}\}$ with respect to the measure~$f_0$, we have~$m<\frac12$ and~$f(t,\cdot)$ converges weakly as~$t\to\infty$ towards the measure~$(1-m)\delta_{\Omega_\infty}+m\delta_{-\Omega_\infty}$.
\end{theorem}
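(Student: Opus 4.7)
The plan is to exploit $|J_f|$ as a strict Lyapunov functional, combine this with weak-$*$ compactness of $\mathcal P(\mathbb S)$ to describe the $\omega$-limit set, and finally to read off the pointwise asymptotics of the characteristics of \eqref{pde-intro}.

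First, differentiating $|J_f|^2$ using \eqref{pde-intro} and an integration by parts on the sphere, one obtains $\dot J_f=\int_{\mathbb S}P_{v^\perp}J_f\,f(t,\d v)$ and
\[
\tfrac{\d}{\d t}|J_f|^2 \;=\; 2\int_{\mathbb S}\bigl(|J_f|^2-(v\cdot J_f)^2\bigr)f(t,\d v)\;\geqslant\;0
\]
by Cauchy--Schwarz, with equality iff $f$ is supported on $\{\pm J_f/|J_f|\}$. Since $|J_f|\leqslant 1$, we have $|J_f|^2\nearrow L^2\in[|J_f(0)|^2,1]$. Integrating the dissipation yields $\int_0^\infty\!\int(|J_f|^2-(v\cdot J_f)^2)f\,\d v\,\d t<\infty$, and the uniform lower bound $|J_f|^2\geqslant|J_f(0)|^2>0$ forces a sequence $t_k\to\infty$ along which $\int(1-(v\cdot\Omega(t_k))^2)f(t_k,\d v)\to 0$.

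Next, by compactness of $\mathcal P(\mathbb S)$ and of $\mathbb S$, I extract further subsequences so that $f(t_k)\rightharpoonup g$ weakly and $\Omega(t_k)\to\Omega^*\in\mathbb S$. Passing to the limit, $g$ satisfies $\int(1-(v\cdot\Omega^*)^2)\,\d g=0$ and $|J_g|=L$; hence $g$ is concentrated on $\{\pm\Omega^*\}$, and matching moments forces
\[
g\;=\;\tfrac{1+L}{2}\delta_{\Omega^*}+\tfrac{1-L}{2}\delta_{-\Omega^*}.
\]
Therefore every subsequential weak limit of $f(t)$ has this form for some $\Omega^*\in\mathbb S$, and in particular $L>0$.

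Finally, since \eqref{pde-intro} is a continuity equation whose drift $v\mapsto P_{v^\perp}J_f(t)$ is smooth in $v$ and continuous in $t$, one has $f(t,\cdot)=(\Phi_t)_\# f_0$ with $\Phi_t$ the flow on $\mathbb S$ of $\dot v=P_{v^\perp}J_f(t)$. The limiting autonomous equation $\dot v=L\,P_{v^\perp}\Omega^*$ has $\Omega^*$ as a hyperbolic sink and $-\Omega^*$ as a hyperbolic source, both with all tangential eigenvalues of modulus $L>0$. Because $J_f(t)-L\Omega^*\to 0$ along the chosen subsequence while $|J_f|\geqslant|J_f(0)|>0$ uniformly in time, a nonautonomous stable-manifold argument near $-\Omega^*$ produces a unique $v_{\mathrm{back}}\in\mathbb S$ with $\Phi_t(v_{\mathrm{back}})\to-\Omega^*$, and every other characteristic is attracted to $\Omega^*$. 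Pushing $f_0$ forward yields $f(t,\cdot)\rightharpoonup(1-m)\delta_{\Omega^*}+m\delta_{-\Omega^*}$ with $m=f_0(\{v_{\mathrm{back}}\})$. This weak limit being independent of the extracted subsequence pins down $\Omega^*=\Omega_\infty$ uniquely, and hence $\Omega(t)\to\Omega_\infty$. To see $m<\tfrac 12$, note that $J_\infty=(1-2m)\Omega_\infty$ has modulus $L>0$ and points along $\Omega_\infty$, which forces $1-2m>0$.

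I expect the main obstacle to be the third step: the nonautonomous characteristic analysis near the source $-\Omega^*$. Producing $v_{\mathrm{back}}$ and showing every other characteristic converges to $\Omega_\infty$ requires controlling the asymptotically autonomous ODE uniformly in time, for which the nontrivial lower bound $|J_f|\geqslant|J_f(0)|>0$ obtained in the Lyapunov step is essential.
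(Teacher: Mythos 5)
There is a genuine gap, and it sits exactly where the real difficulty of the theorem lies: the convergence of $\Omega(t)$ to a single limit $\Omega_\infty$. Your Lyapunov step only gives that the dissipation $\int_\mathbb{S}(1-(v\cdot\Omega)^2)f\,\d v$ is integrable in time, i.e.\ essentially $\dot{\Omega}\in L^2(\mathbb{R}_+)$ (see \eqref{omegadot}), and hence convergence $\Omega(t_k)\to\Omega^*$ only along some subsequence. Your third step then invokes a nonautonomous stable-manifold argument near $-\Omega^*$, but that argument needs the drift to be asymptotically autonomous, i.e.\ $J_f(t)\to L\Omega^*$ as $t\to+\infty$ along \emph{all} times --- which is precisely the full convergence of $\Omega(t)$ you have not yet established. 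The closing sentence ``this weak limit being independent of the extracted subsequence pins down $\Omega^*=\Omega_\infty$'' is therefore circular: a priori the $\omega$-limit set of $\Omega(t)$ could be a nontrivial connected subset of $\mathbb{S}$ (the direction could keep drifting slowly while $|J_f|\nearrow L$), and every candidate limit measure $\frac{1+L}{2}\delta_{\Omega^*}+\frac{1-L}{2}\delta_{-\Omega^*}$ has the same value $|J_g|=L$ of the Lyapunov functional, so LaSalle-type reasoning cannot select a single $\Omega^*$. This is the classical continuum-of-equilibria obstruction, and the paper addresses it head on: it computes the second derivative of $\Omega$, rewrites \eqref{ddtOmegadot2} in the form $\frac12\frac{\d}{\d t}|\dot{\Omega}|^2=|\dot{\Omega}|^2+|\dot{\Omega}|g$ with $g\in L^1$ given by \eqref{defg}, and then uses Lemma~\ref{lem-L1} (bounded solutions of $y'=y+g$ with $g\in L^1$ are integrable) to upgrade $\dot{\Omega}\in L^2$ to $\dot{\Omega}\in L^1$, which is what yields $\Omega(t)\to\Omega_\infty$ (Proposition~\ref{prop-omega-converges}). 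Some substitute for this quantitative control on the total rotation of $\Omega$ is indispensable; you even flag step three as the main obstacle, but attribute the difficulty to the lower bound $|J_f|\geqslant|J_f(0)|$, which is not the missing ingredient.

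Two secondary remarks. First, your claim that \emph{every} subsequential weak limit of $f(t,\cdot)$ is a two-atom measure requires the dissipation to tend to $0$ as $t\to\infty$, not merely to be integrable; this is fixable by the bounded-derivative (Barb\u{a}lat-type) argument used in Lemma~\ref{lem-increasing}, but it should be said. Second, once convergence of $\Omega$ is granted, your characteristic analysis is in the right spirit: the paper's Proposition~\ref{prop-vback} does the same thing more elementarily (comparison with the scalar ODE $y'=\lambda(1-y^2)-\varepsilon$, a backward-in-time compactness construction of $v_{\mathrm{back}}$, and uniqueness via the repulsion identity \eqref{odevvtilde}), and the final passage to the limit of $f(t,\cdot)=\Phi_t\#f_0$ by dominated convergence, together with $J_f(t)\to(1-2m)\Omega_\infty$ forcing $m<\frac12$, matches your outline.
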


In particular, this theorem shows that if the initial condition~$f_0$ has no atoms and satisfies~$J_{f_0}\neq0$, then the measure~$f$ converges weakly to a Dirac mass at some~$\Omega_\infty\in\mathbb{S}$. Let us mention that there is no rate of convergence in this theorem. In general, there is no hope to have such a rate for an arbitrary initial condition (see Proposition~\ref{prop-no-rate}), but under regularity assumptions, one can expect to have an exponential rate of convergence (this is the case when the initial condition has some symmetries implying that~$\Omega(t)$ is constant, see Proposition~\ref{prop-rates-regular}).

We will also study in detail the system of ordinary differential equations~\eqref{ode-intro}. Since this is a particular case of~\eqref{pde-intro} in the case where~$f=\frac1{N}\sum_{i=1}^N\delta_{v_i}$ (see Remark~\ref{remark-ode-pde}), we can apply the main theorem, but now the measure~$f$ has atoms, and actually we will see that working directly with the differential equations allows to have more precise results such as exponential rates of convergence. For instance the quantity~$\Omega(t)$ plays the role as a nearly conserved quantity, as it converges to~$\Omega_\infty$ at a higher rate than the convergence of the~$(v_i)_{1\leqslant i\leqslant n}$. More precisely, we will prove the following theorem:

\begin{theorem}\label{thm-ode}
Given~$N$ positive real numbers~$(m_i)_{1\leqslant i\leqslant N}$ with~$\sum_{i=1}^Nm_i=1$, and~$N$ unit vectors~$v_i^0\in\mathbb{S}$ (for~$1\leqslant i\leqslant N$) such that~$v_i^0\neq v_j^0$ for all~$i\neq j$, let~$(v_i)_{1\leqslant i\leqslant N}$ be the solution of the following system of ordinary differential equations :
\begin{equation}
  \label{ode-with-mi}
\frac{\d v_i}{\d t}=P_{v_i^\perp}J, \text{ with } J(t)=\sum_{i=1}^Nm_iv_i(t),
\end{equation}
with the initial conditions~$v_i(0)=v_i^0$ for~$1\leqslant i\leqslant N$, and where~$P_{v_i^\perp}$ denotes the projection on the orthogonal of~$v_i$.

If~$J(0)\neq0$, then~$t\mapsto|J(t)|$ is nondecreasing, so~$\Omega(t)=\frac{J(t)}{|J(t)|}\in\mathbb{S}$ is well-defined for all times~$t\geqslant0$. Furthermore there exists~$\Omega_\infty\in\mathbb{S}$ such that~$\Omega(t)$ converges to~$\Omega_\infty(t)$ as~$t\to+\infty$, and there are only two types of possible asymptotic regimes, which are described below.
\begin{itemize}
\item[(i)] All the vectors~$v_i$ are converging to~$\Omega_\infty$. Then this convergence occurs at an exponential rate~$1$, and~$\Omega$ is converging to~$\Omega_\infty$ at an exponential rate~$3$. More precisely, there exists~$a_i\in\{\Omega_\infty\}^\perp\subset\mathbb{R}^n$, for~$1\leqslant i\leqslant N$ such that~$\sum_{i=1}^Nm_ia_i=0$ and that, as~$t\to+\infty$,
\begin{align*}
v_i(t)&=(1-|a_i|^2e^{-2t})\Omega_\infty+e^{-t}a_i +O(e^{-3t})\quad \text{for }1\leqslant i\leqslant N,\\
\Omega(t)&=\Omega_\infty+O(e^{-3t}).
\end{align*}
\item[(ii)] There exists~$i_0$ such that~$v_{i_0}$ converges to~$-\Omega_\infty$. Then~$m_{i_0}<\frac12$, and if we denote~$\lambda=1-2m_{i_0}$, the vector~$v_{i_0}$ converges to~$-\Omega_\infty$ at an exponential rate~$3\lambda$. Furthermore, all the other vectors~$v_i$ for~$i\neq i_0$ converge to~$\Omega_\infty$ at a rate~$\lambda$, and the vector~$\Omega$ converges to~$\Omega_\infty$ at a rate~$3\lambda$. More precisely, there exists~$a_i\in\{\Omega_\infty\}^\perp\subset\mathbb{R}^n$, for~$i\neq i_0$ such that~$\sum_{i\neq i_0}m_ia_i=0$ and that, as~$t\to+\infty$,
\begin{align*}
v_i(t)&=(1-|a_i|^2e^{-2\lambda t})\Omega_\infty+e^{-\lambda t}a_i +O(e^{-3\lambda t})\quad \text{for }i\neq i_0,\\
v_{i_0}(t)&=-\Omega_\infty+O(e^{-3\lambda t}),\\
\Omega(t)&=\Omega_\infty+O(e^{-3\lambda t}).
\end{align*}
\end{itemize}
\end{theorem}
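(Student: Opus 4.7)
A direct computation gives $\frac{d}{dt}|J|^2 = 2\sum_i m_i |P_{v_i^\perp} J|^2 \geq 0$, so $|J|$ is nondecreasing and stays bounded below by $|J(0)|>0$. Observe next that the system $\dot v_i = P_{v_i^\perp} J$ is the gradient flow, with respect to the weighted product metric $\langle w,w'\rangle = \sum_i m_i\, w_i\cdot w_i'$ on the product of $N$ copies of $\mathbb{S}$, of the real-analytic functional $V = -\tfrac12|J|^2$. I would apply the \L{}ojasiewicz--Simon inequality for bounded trajectories of analytic gradient flows to conclude that $(v_1,\ldots,v_N)$ converges to a single critical point. Any such critical point has $P_{v_i^\perp}J_\infty = 0$ for every $i$, forcing $v_i^\infty = \epsilon_i\Omega_\infty$ with $\epsilon_i\in\{\pm 1\}$; in particular $\Omega(t)\to\Omega_\infty$.

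\textbf{Step 2 (classification of regimes).} To identify which $\epsilon_i$ equal $-1$, I introduce $\phi_{ij} = 1 - v_i\cdot v_j$, which satisfies the linear ODE $\dot\phi_{ij} = -\phi_{ij}\,J\cdot(v_i+v_j)$ and so preserves its sign (which also justifies that distinct initial positions remain distinct). If two indices $i\neq j$ both had $v_i, v_j\to-\Omega_\infty$, the coefficient $J\cdot(v_i+v_j)$ would converge to $-2|J_\infty|<0$, forcing $\phi_{ij}$ to grow exponentially and contradicting $\phi_{ij}\leq 2$. Therefore at most one $\epsilon_{i_0}$ equals $-1$, and then $J_\infty = (1-2m_{i_0})\Omega_\infty$ of positive norm forces $m_{i_0}<\tfrac12$, so $\lambda = 1-2m_{i_0}>0$.

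\textbf{Step 3 (linear and second-order analysis of the rates).} Writing $v_i = \epsilon_i\Omega_\infty + \eta_i$ with $\eta_i\perp\Omega_\infty$ and setting $c = \sum_j m_j\epsilon_j\in\{1,\lambda\}$, the linearization reads $\dot\eta_i = \bar\eta - \epsilon_i c\,\eta_i$ with $\bar\eta = \sum_j m_j\eta_j$. A spectral analysis of the rank-one operator $L\eta = \bar\eta\,\mathbf{1} - c\,\mathrm{diag}(\epsilon)\eta$ gives, in case (i), eigenvalues $0$ (rotational zero mode, absorbed by choosing $\Omega_\infty$ so that $\sum m_i a_i = 0$) and $-1$ of multiplicity $N-1$; in case (ii), eigenvalues $0$ (constraint $\sum_{i\neq i_0} m_i a_i = 0$), $-\lambda$ of multiplicity $N-2$ with eigenvectors supported on $\{i\neq i_0\}$, and one unstable $+1$ whose coefficient must vanish along the convergent trajectory (so linearly $\eta_{i_0}=0$). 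Inserting the ansatz $\eta_i = e^{-ct}a_i + e^{-2ct}b_i + O(e^{-3ct})$ into the full nonlinear equations and matching at order $e^{-2ct}$, the rank-one structure forces every $b_i$ to equal a common constant that is then pinned to zero by the constraint, so the tangential part of $J$ -- hence $\Omega-\Omega_\infty$ -- is $O(e^{-3ct})$. In case (ii), the analogous second-order matching for $\eta_{i_0}$ and for the $b_i$ with $i\neq i_0$ produces a $2\times2$ linear system whose determinant vanishes precisely on the polynomial $(4m_{i_0}-3)(2m_{i_0}-1)=0$; since $m_{i_0}\in(0,\tfrac12)$ the determinant is nonzero, both coefficients vanish, and $v_{i_0}=-\Omega_\infty+O(e^{-3\lambda t})$.

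\textbf{Main obstacle.} The most delicate point is Step~1: the critical set of $V$ is a disjoint union of copies of $\mathbb{S}$ (a continuum of equilibria), so LaSalle's principle only yields accumulation on this set, and the \L{}ojasiewicz--Simon inequality (whose applicability rests on the analyticity of $V$ and the compactness of the phase space) is what I need to promote accumulation to convergence of $(v_1,\ldots,v_N)$. A secondary but nontrivial point is the bookkeeping at second order in Step~3 that turns the naive $2\lambda$-rate into the sharp $3\lambda$-rate, relying on the explicit fact that the determinantal obstruction fails away from $m_{i_0}\in\{\tfrac12,\tfrac34\}$.
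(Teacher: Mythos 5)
Your Steps 1 and 2 are sound. Step 1 takes a genuinely different route from the paper: the paper never invokes \L{}ojasiewicz (it first shows only $(v_i\cdot\Omega)^2\to1$ and obtains convergence of $\Omega$ later, together with the rates, by direct estimates), whereas your gradient-flow/\L{}ojasiewicz argument on the compact analytic manifold $(\mathbb{S})^N$ is a legitimate way to get convergence of the configuration, using $|J|\geqslant|J(0)|>0$ to exclude the critical points with $J=0$. Step 2 is essentially the paper's repulsion argument (equation~\eqref{odevvtilde}/\eqref{dvivj}) and correctly yields at most one index with $\epsilon_{i_0}=-1$ and $m_{i_0}<\frac12$.

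The genuine gap is Step 3, which is where the actual content of the theorem (the rates $1$, $3$, $\lambda$, $3\lambda$ and the precise expansions) lives. What you present is formal matched asymptotics: the ansatz $\eta_i=a_ie^{-ct}+b_ie^{-2ct}+O(e^{-3ct})$ presupposes exactly the remainder control that has to be proved, and nothing in your argument supplies it. \L{}ojasiewicz gives convergence but no exponential rate, and linear stability cannot be quoted directly because the critical set is a manifold (zero eigenvalues) and, in case (ii), there is an unstable eigenvalue $+1$; saying ``its coefficient must vanish along the convergent trajectory'' is a statement about the linearization, not about the nonlinear solution. The paper's proof fills precisely these holes: (a) a bootstrap that first gives $\|v_i-v_j\|=O(e^{-(\lambda-\varepsilon)t})$ and then removes the $\varepsilon$; (b) in case (ii), the convex-cone/Farkas argument showing $-v_{i_0}(t)$ lies in the cone spanned by the $v_i(t)$, $i\neq i_0$, which is what converts the mutual estimates among $\{v_i\}_{i\neq i_0}$ into $v_i\cdot v_{i_0}=-1+O(e^{-2\lambda t})$ — without an ingredient of this type you have no quantitative control of $v_{i_0}$ at all; and (c) the Duhamel/``only bounded solution of $y'=y+O(e^{-\beta t})$'' trick applied twice to $y=\frac{\d v_{i_0}}{\d t}$, plus the exact cancellation $P_{\Omega^\perp}J=0$ in $\dot\Omega=\sum_i m_i(1-v_i\cdot\Omega)P_{\Omega^\perp}v_i$, which is what rigorously upgrades the naive rate $2\lambda$ to $3\lambda$ for $v_{i_0}$ and $\Omega$. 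Your second-order bookkeeping is consistent with the truth (indeed the matching determinant is $\det(A+2\lambda I)=2\lambda(2\lambda+1)\propto(2m_{i_0}-1)(4m_{i_0}-3)$, matching your polynomial, and in case (i) the matching forces $\bar a=0$, $b_i=-\bar b$, $\bar b=0$), but to turn it into a proof you must first establish the a priori exponential decay and then justify the $O(e^{-3ct})$ remainders by arguments of the type (a)--(c); you would also need to actually verify that the quadratic source in the two-dimensional block vanishes, which you assert without computation.
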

Notice that the original system~\eqref{ode-intro} can be put as~\eqref{ode-with-mi} with~$m_i=\frac1N$, but the assumption~$v_i^0\neq v_j^0$ for~$i\neq j$ may not be satisfied. Up to renumbering particles and grouping those starting in the same position by setting~$m_i=\frac{k}N$ where~$k$ is the number of particles sharing the same initial condition, we can always fall into the framework of~\eqref{ode-with-mi} with distinct initial conditions. We can finally remark that this system~\eqref{ode-with-mi} is still a particular case of the kinetic equation~\eqref{pde-intro} for a measure given by~$f=\sum_{i=1}^Nm_i\delta_{v_i}$ (see once again Remark~\ref{remark-ode-pde}).

Let us conclude this introduction by saying that these models have also been introduced and studied in different contexts from the one of self-propelled particles. Alignment on the sphere has been introduced as a model of opinion formation in~\cite{aydogdu2017opinion,caponigro2015nonlinear}. The kinetic equation~\eqref{pde-intro} with a diffusion term corresponds to the evolution of rodlike polymers with dipolar potential~\cite{fatkullin2005critical}. Finally the two-dimensional case, where~$\mathbb{S}$ is the unit circle, can correspond to the evolution of identical Kuramoto oscillators. The results we present here were first exposed in detail (with the same proofs as in the present paper) by the first author in the CIMPA Summer School “Mathematical Modeling in Biology and Medicine” in June 2016. They are somewhat similar to those of~\cite{benedetto2015complete} in dimension two, in the context of Kuramoto oscillators, a work that has been raised to us during the presentation of Bastien Fernandez in the workshop “Life Sciences” of the trimester “Stochastic Dynamics out of equilibrium” in May 2017. Very recently, a work~\cite{ha2018relaxation} on generalization of Kuramoto oscillators in higher dimensions, the so-called Lohe oscillators, recovers the same kind of results, although not using exactly the same techniques and not obtaining the precise estimates of Theorem~\ref{thm-ode}. The estimates given by Proposition~\eqref{prop-rates-regular} are also new, as far as we know.

This paper is divided in two main parts. After this introduction, Section~\ref{section-pde} is devoted to the kinetic equation~\eqref{pde-intro}. It is divided in two subsections, the first one being dedicated to the proof of Theorem~\ref{thm-pde}, and the second one giving more precise estimates of convergence in case of symmetries in the initial condition. Section~\ref{section-ode} concerns the system of differential equations~\eqref{ode-with-mi} and the proof of Theorem~\ref{thm-ode}. Even if some conclusions can be drawn using Theorem~\ref{thm-pde} thanks to~Remark~\ref{remark-ode-pde}, we try to make the two parts independent and the proofs self-contained, so the reader interested in Theorem~\ref{thm-ode} can directly jump to this last section.

\section{The continuum model}\label{section-pde}
\subsection{Proof of Theorem~\ref{thm-pde}}
We start with a proposition about well-posedness of the kinetic equation~\eqref{pde-intro}. We proceed for instance as in~\cite{spohn1991large}. We denote by~$\mathcal{P}(\mathbb{S})$ the set of probability measures on~$\mathbb{S}$. In this set we consider the Wasserstein distance~$W_1$ (also called bounded Lipschitz distance) given by~$W_1(\mu,\nu)=\inf_{\varphi\in\mathrm{Lip}_1(\mathbb{S})}|\int_{\mathbb{S}}\varphi\,\d \mu-\int_\mathbb{S}\varphi\,\d\nu|$ for~$\mu$ and~$\nu$ in~$\mathcal{P}(\mathbb{S})$, where~$\mathrm{Lip}_1$ is the set of functions~$\varphi$ such that for all~$u,v$ in~$\mathbb{S}$, we have~$|\varphi(u)-\varphi(v)|\leqslant|v-u|$. This distance corresponds to the weak convergence of probability measures :~$W_1(\mu_n,\mu)\to0$ if and only if for any continuous function~$\varphi:\mathbb{S}\to\mathbb{R}$, we have~$\int_\mathbb{S}\varphi\, \d \mu_n\to\int_\mathbb{S}\varphi\,\d \mu$. The well-posedness result is stated in the space~$C(\mathbb{R}_+,\mathcal{P}(\mathbb{S}))$ of family of probability measures weakly continuous with respect to time:
\begin{proposition} Given~$T>0$ and~$f_0\in\mathcal{P}(\mathbb{S})$, there exists a unique weak solution~$f\in C([0,T],\mathcal{P}(\mathbb{S}))$ to the equation~\eqref{pde-intro} with initial condition~$f_0$, in the sense that for all~$t\in[0,T]$, and for all~$\varphi\in C^1(\mathbb{S})$, we have 
  \begin{equation}
    \frac{\d}{\d t}\int_\mathbb{S}\varphi(v)f(t,v)\,\d v=\int_\mathbb{S}J_{f(t,\cdot)}\cdot\nabla_v\varphi(v)f(t,v)\,\d v,\label{pde-weak}\end{equation}
were we use the notation~$f(t,v)\,\d v$ even if~$f(t,\cdot)$ is not absolutely continuous with respect to the Lebesgue measure on~$\mathbb{S}$, and~$J_{f(t,\cdot)}=\int_\mathbb{S}vf(t,v)\,\d v$.
\end{proposition}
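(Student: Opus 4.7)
The natural approach, following \cite{spohn1991large}, is to reformulate the equation as a fixed point for the flow associated to the self-consistent drift, and to carry out a Picard iteration in $W_1$. The preliminary observation is that the field $v\mapsto P_{v^\perp}J=J-(v\cdot J)v$ is smooth on the ambient space, tangent to $\mathbb{S}$, and Lipschitz in $v$ with constant bounded by $2|J|$; hence, for any continuous curve $t\mapsto J(t)\in\mathbb{R}^n$ with $|J(t)|\leqslant 1$, the non-autonomous ODE $\dot v=P_{v^\perp}J(t)$ generates a well-defined flow $\Phi^J_t:\mathbb{S}\to\mathbb{S}$ on $[0,T]$, and moreover this flow depends Lipschitz-continuously on $J$: by Grönwall one obtains
\[\sup_{v\in\mathbb{S}}|\Phi^J_t(v)-\Phi^{J'}_t(v)|\leqslant e^{2T}\int_0^t|J(s)-J'(s)|\,\d s.\]

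The plan is then to let $X_T$ be the space of $J\in C([0,T],\mathbb{R}^n)$ with $\|J\|_\infty\leqslant 1$, and to define a map $\Gamma:X_T\to X_T$ by setting $\Gamma(J)(t)=\int_\mathbb{S}\Phi^J_t(v)\,\d f_0(v)$; equivalently, $\Gamma(J)(t)=J_{f}(t)$ where $f(t,\cdot)=\Phi^J_t\#f_0$ is the pushforward of the initial datum along the flow. That $\|\Gamma(J)\|_\infty\leqslant 1$ is immediate because $\Phi^J_t$ takes values in $\mathbb{S}$. The contraction estimate follows from the Grönwall bound above, since
\[|\Gamma(J)(t)-\Gamma(J')(t)|\leqslant\int_\mathbb{S}|\Phi^J_t(v)-\Phi^{J'}_t(v)|\,\d f_0(v)\leqslant e^{2T}\int_0^T|J(s)-J'(s)|\,\d s,\]
so $\Gamma$ is a contraction for $T$ small enough, yielding a unique fixed point $J^\star$. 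Setting $f(t,\cdot):=\Phi^{J^\star}_t\#f_0$ then gives a candidate solution; this construction can be iterated on successive intervals because no norm blows up (the $L^\infty$ bound on $J$ is global), thus producing a solution on any $[0,T]$.

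It remains to check that $f$ indeed satisfies the weak formulation \eqref{pde-weak} and is unique among all weak solutions. For the first point, I would fix $\varphi\in C^1(\mathbb{S})$ and differentiate $t\mapsto\int_\mathbb{S}\varphi(v)\,\d f(t,v)=\int_\mathbb{S}\varphi(\Phi^{J^\star}_t(v_0))\,\d f_0(v_0)$ under the integral sign, using $\frac{\d}{\d t}\varphi(\Phi^{J^\star}_t(v_0))=\nabla_v\varphi(\Phi^{J^\star}_t(v_0))\cdot P_{v^\perp}J^\star(t)$ together with the self-adjointness of the projector, and then recognising $J^\star(t)=J_{f(t,\cdot)}$ by construction. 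The weak continuity in time is a consequence of the flow being continuous in $t$. For uniqueness, given any weak solution $\tilde f$ in $C([0,T],\mathcal P(\mathbb{S}))$, the curve $\tilde J(t):=J_{\tilde f(t,\cdot)}$ is continuous with $|\tilde J(t)|\leqslant 1$, so the method of characteristics for measures (Duhamel along $\Phi^{\tilde J}$) identifies $\tilde f(t,\cdot)=\Phi^{\tilde J}_t\#f_0$; then $\tilde J$ is a fixed point of $\Gamma$, and by the contraction property $\tilde J=J^\star$, hence $\tilde f=f$.

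The only nontrivial step is the measure-theoretic identification $\tilde f(t,\cdot)=\Phi^{\tilde J}_t\#f_0$ used for uniqueness: one must verify that a weak solution is necessarily transported by the characteristic flow of its own drift. This is done in a standard way by testing \eqref{pde-weak} against $\varphi_t(v):=\psi(\Phi^{\tilde J}_{T-t}(v))$ for $\psi\in C^1(\mathbb{S})$ and noticing that the resulting $\frac{\d}{\d t}\int\varphi_t\,\d\tilde f(t,\cdot)$ vanishes, so that $\int\psi\,\d\tilde f(T,\cdot)=\int\psi\circ\Phi^{\tilde J}_T\,\d f_0$; the use of a time-dependent test function and the fact that $\Phi^{\tilde J}_s$ is only $C^1$ (not $C^\infty$) in $v$ is the main technical point where one must be careful, but standard approximation by smooth drifts handles it.
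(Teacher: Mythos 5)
Your proposal follows essentially the same route as the paper: freeze the mean field, push $f_0$ forward along the flow of $\frac{\d v}{\d t}=P_{v^\perp}\mathcal{J}(t)$, and run a Banach fixed point on $\mathcal{J}\mapsto J_{\Phi^{\mathcal{J}}_t\#f_0}$ in $C([0,T],B)$; the only differences are that the paper obtains the contraction on all of $[0,T]$ at once via the weighted distance $\sup_t|\mathcal{J}(t)-\bar{\mathcal{J}}(t)|e^{-\beta t}$ with $\beta>3$ (avoiding your short-time-plus-iteration step, which is also fine since the bound $|J|\leqslant1$ is global), and that it handles the identification of an arbitrary weak solution with the pushforward by invoking the standard uniqueness theory for the linear continuity equation (the Ambrosio--Crippa reference) rather than redoing the duality argument. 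One small correction to your sketch of that duality step: the drift is non-autonomous, so the backward test function must be built from the two-parameter flow $\Phi_{t\to T}$ (solving from time $t$ to time $T$), not from $\Phi^{\tilde J}_{T-t}$, otherwise the cancellation in $\frac{\d}{\d t}\int_\mathbb{S}\varphi_t\,\d\tilde f(t,\cdot)$ does not occur; also your caution about the flow being only $C^1$ in $v$ is unnecessary, since $v\mapsto P_{v^\perp}\mathcal{J}(t)$ is polynomial in $v$, hence the flow is smooth in $v$ and no approximation by smooth drifts is needed.
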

\begin{proof} Notice that the term~$P_{v^\perp}J_f\cdot\nabla_v\varphi$ that we obtain when doing a formal integration by parts of~\eqref{pde-intro} against a test function~$\varphi$ is replaced by~$J_f\cdot\nabla_v\varphi$ in the weak formulation~\eqref{pde-weak}, since the gradient on the sphere at a point~$v$ is already orthogonal to~$v$. The proof of this proposition relies on the fact that the linear equation corresponding to~\eqref{pde-intro} when replacing~$J_f$ by an external given “alignment field”~$\mathcal{J}\in C(\mathbb{R}_+,\mathbb{R}^n)$ is also well-posed. Indeed the solution to this linear equation, namely
  \begin{equation}\partial_tf+\nabla_v\cdot(P_{v^\perp}\mathcal{J}(t)f)=0\quad \text{with}\quad f(0,\cdot)=f_0,\label{pde-linear}
  \end{equation}
  is given by the image measure of~$f_0$ by the flow~$\Phi_t$ of the differential equation~$\frac{\d v}{\d t}=P_{v^\perp}\mathcal{J}(t)$. In detail, if~$\Phi_t$ is the solution of
  \begin{equation}
    \begin{cases}\frac{\d \Phi_t}{\d t}=P_{\Phi_t^\perp}\mathcal{J}(t),\\\Phi_0(v)=v,
    \end{cases}\label{ode-flow}
  \end{equation}
  then the solution~$f(t,\cdot)=\Phi_t\#f_0$ is characterized by the fact that
  \begin{equation}\label{pushforward}
   \forall\varphi\in C(\mathbb{S}), \int_\mathbb{S}\varphi(v)f(t,v)\,\d v=\int_\mathbb{S}\varphi(\Phi_t(v))f_0(v)\,\d v.
  \end{equation}
  Since the differential equation~\eqref{ode-flow} satisfies the assumptions for which the Cauchy-Lipschitz theorem applies, it is well-known (see for instance~\cite{ambrosio2008existence}) that the solution of~\eqref{pde-linear} is unique and given by~$\Phi_t\#f_0$.

  Therefore, if, given~$\mathcal{J}\in C([0,T],\mathbb{R}^n)$, we denote by~$\Psi(\mathcal{J})$ the solution of the linear equation~\eqref{pde-linear}, solving the nonlinear kinetic equation~\eqref{pde-intro} corresponds to finding a fixed point of the map~$f\in C([0,T],\mathcal{P(S)})\mapsto\Psi(J_f)$, or equivalently of the map~$\mathcal{J}\in C([0,T],B)\mapsto J_{\Psi(\mathcal{J})}$, where~$B$ is the closed unit ball of~$\mathbb{R}^n$ (recall that if~$f\in\mathcal{P}(\mathbb{S})$, then~$|J_f|\leqslant1$). The space~$E=C([0,T],B)$ is a complete metric space if the distance is given by~$d_T(\mathcal{J},\bar{\mathcal{J}})=\sup_{t\in[0,T]}|\mathcal{J}(t)-\bar{\mathcal{J}}(t)|e^{-\beta t}$, for an arbitrary~$\beta>0$. Using the fact that~$|(P_{v^\perp}-P_{\bar{v}^\perp})u|\leqslant2|v-\bar{v}|$ if~$|u|\leqslant1$, by a simple Grönwall estimate, if~$\mathcal{J},\bar{\mathcal{J}}\in E$ and~$\Phi_t,\bar{\Phi_t}$ are the associated flow given by~\eqref{ode-flow}, we obtain
  \[|\Phi_t-\bar{\Phi}_t|\leqslant\int_0^t|\mathcal{J}(s)-\bar{\mathcal{J}}(s)|e^{2(t-s)}\d s.\]
  Finally, we get (using the notation~$J_{f}(t)=J_{f(t,\cdot)}$)
 \[\begin{split}|J_{\Psi(\mathcal{J})}(t)-J_{\Psi(\bar{\mathcal{J}})}(t)|&=\left|\int_\mathbb{S} v\, \Psi(\mathcal{J})(t,v)\,\d v -\int_\mathbb{S} v\, \Psi(\bar{\mathcal{J}})(t,v)\,\d v\right|\\&=\left|\int_\mathbb{S}[\Phi_t(v)-\bar{\Phi}_t(v)]f_0(v)\, \d v\right|\\
      &\leqslant\int_0^t|\mathcal{J}(s)-\bar{\mathcal{J}}(s)|e^{2(t-s)}\d s\leqslant d_t(\mathcal{J},\bar{\mathcal{J}})\int_0^te^{2(t-s)+\beta s}\d s.
    \end{split} \]
  Therefore when~$\beta>2$ we get~$|J_{\Psi(\mathcal{J})}(t)-J_{\Psi(\bar{\mathcal{J}})}(t)|e^{-\beta t}\leqslant\frac1{\beta-2}d_t(\mathcal{J},\bar{\mathcal{J}})$, so if we take~$\beta>3$, we get that the map~$\mathcal{J}\mapsto J_{\Psi(\mathcal{J})}$ is indeed a contraction mapping from~$E$ to~$E$, which gives the existence and uniqueness of the fixed point.
  \qed
\end{proof}

\begin{remark}\label{remark-sobolev} The well-posedness of the kinetic equation~\eqref{pde-intro} can also be established in Sobolev spaces, by means of harmonic analysis on the sphere and standard Galerkin method (see~\cite{frouvelle2012dynamics}).
\end{remark}

\begin{remark}\label{remark-ode-pde} Using the weak formulation~\eqref{pde-weak} and the definition of the pushforward measure~\eqref{pushforward}, it is possible to show that a convex combination of Dirac masses, of the form~$f(t,\cdot)=\sum_{i=1}^Nm_i\delta_{v_i}(t)$ with~$m_i\geqslant0$ for~$1\leqslant i\leqslant N$ and~$\sum_{i=1}^Nm_i=1$ is a weak solution of~\eqref{pde-intro} if and only if the~$(v_i)_{1\leqslant i\leqslant N}$ are solutions of the system of differential equations~\eqref{ode-with-mi}.
\end{remark}

We are now ready to prove some qualitative properties of the solution to the kinetic equation~\eqref{pde-intro}. Without further notice, we will denote by~$f$ this solution, and by~$\Phi_t$ the flow~\eqref{ode-flow} associated to~$\mathcal{J}=J_f$. The first property is a simple lemma related to the monotonicity of~$|J_f|$.
\begin{lemma}\label{lem-increasing} If~$f$ is a solution of~\eqref{pde-intro}, then~$|J_f|$ is nondecreasing in time. Therefore if~$J_{f_0}\neq0$, the “average orientation”~$\Omega(t)=\frac{J_f(t)}{|J_f(t)|}$ is well defined and smooth. Furthermore its time derivative~$\dot{\Omega}$ tends to~$0$ as~$t\to\infty$.
\end{lemma}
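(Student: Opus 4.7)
The plan is to first derive the time evolution of $J_f$ by applying the weak formulation \eqref{pde-weak} to the linear test functions $\varphi_k(v) = v_k$, $1 \leqslant k \leqslant n$. Since $\nabla_v(v\cdot w)$ on the sphere is the tangential projection $P_{v^\perp} w$ of the Euclidean gradient, this yields
\[ \dot{J}_f = \int_\mathbb{S} P_{v^\perp} J_f \, f \, \d v. \]
Taking the scalar product with $J_f$ and using the orthogonal decomposition $|J_f|^2 = (v\cdot J_f)^2 + |P_{v^\perp} J_f|^2$, I obtain
\[ \frac{\d}{\d t}|J_f|^2 \;=\; 2\,J_f\cdot \dot{J}_f \;=\; 2\int_\mathbb{S} |P_{v^\perp} J_f|^2\, f\,\d v \;\geqslant\; 0, \]
which proves the monotonicity. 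When $J_{f_0} \neq 0$, this forces $|J_f(t)| \geqslant |J_{f_0}| > 0$ for all $t$, so $\Omega$ is well-defined, with the same smoothness as $J_f$; the latter is at least $C^1$ by bootstrapping on the closed formula for $\dot J_f$ above.

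For the asymptotic vanishing of $\dot\Omega$, I split the argument in two steps. First, using the formula $\dot\Omega = |J_f|^{-1} P_{\Omega^\perp} \dot{J}_f$ together with a Cauchy--Schwarz estimate on $\dot J_f$ against the probability measure $f$, I obtain the pointwise bound
\[ |\dot\Omega(t)|^2 \;\leqslant\; \frac{|\dot J_f(t)|^2}{|J_{f_0}|^2} \;\leqslant\; \frac{1}{|J_{f_0}|^2}\int_\mathbb{S} |P_{v^\perp} J_f|^2\, f\,\d v \;=\; \frac{1}{2|J_{f_0}|^2}\,\frac{\d}{\d t}|J_f(t)|^2. \]
Since $|J_f|^2$ is nondecreasing and bounded above by $1$, the function $D(t) := \frac{\d}{\d t}|J_f(t)|^2$ is nonnegative and integrable on $[0,\infty)$.

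The main (and only) technical point is then to upgrade this integrability to pointwise convergence $D(t)\to 0$, for which I would invoke Barbalat's lemma, whose hypothesis is uniform continuity of $D$. Writing $D = 2|J_f|^2 - 2\int_\mathbb{S} (v\cdot J_f)^2 f \, \d v$ and using the pushforward representation $f(t,\cdot) = \Phi_t \# f_0$ from the well-posedness proposition, both summands are smooth in time and their derivatives reduce to explicit integrals involving $\dot\Phi_t = P_{\Phi_t^\perp} J_f$ and $\dot{J}_f$, which are uniformly bounded by absolute constants (using $|J_f|\leqslant 1$ and $|\dot J_f|\leqslant 2$ coming from the formula for $\dot J_f$). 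Hence $D$ is Lipschitz on $[0,\infty)$, Barbalat applies, and the pointwise bound then yields $|\dot\Omega(t)|^2 \leqslant D(t)/(2|J_{f_0}|^2) \to 0$ as $t\to\infty$.
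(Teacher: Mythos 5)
Your proof is correct and follows essentially the same route as the paper. The monotonicity of $|J_f|^2$ via the weak formulation with linear test functions, the identity $\frac{\d}{\d t}|J_f|^2=2\int_\mathbb{S}|P_{v^\perp}J_f|^2 f\,\d v$, and the Barbalat-type passage from $D\in L^1$ with $D$ uniformly continuous to $D\to 0$ all match the paper's argument (the paper calls the latter a ``classical exercise''). The one cosmetic difference is how you bound $|\dot\Omega|^2$ by $D(t)$: you apply Jensen's inequality (through the probability measure $f$) to $\dot J_f=\int_\mathbb{S}P_{v^\perp}J_f\,f\,\d v$, while the paper introduces the matrix $M_f=\int_\mathbb{S}P_{v^\perp}f\,\d v$, writes $\dot\Omega=P_{\Omega^\perp}(M_f\Omega)$, and uses that $M_f$ has spectrum in $[0,1]$ to get $|M_f\Omega|^2\leqslant\Omega\cdot M_f\Omega$. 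These are the same underlying inequality $|\langle P_{v^\perp}\Omega\rangle_f|^2\leqslant\langle|P_{v^\perp}\Omega|^2\rangle_f$ expressed in two ways, so neither route buys anything the other does not. The paper's introduction of $M_f$ is not just notational economy, however: the matrix $M_f$ and the identity $\dot\Omega=P_{\Omega^\perp}(M_f\Omega)$ are re-used heavily in Proposition~\ref{prop-omega-converges}, so the paper is setting up machinery there; your Jensen bound, while equally valid for the present lemma, would have to be recast in that later proof.
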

\begin{proof} Notice that if~$J_{f_0}=0$, then~$f(t,\cdot)=f_0$ for all~$t$. To compute the evolution of~$J_f$, we use~\eqref{pde-weak} with~$\varphi(v)=v\cdot e$ for an arbitrary vector~$e$ in~$\mathbb{R}^n$. We obtain, using the fact that~$\nabla_v(v\cdot e)=P_{v^\perp}e$:
  \[e\cdot\frac{\d J_f}{\d t}=J_f\cdot\int_\mathbb{S}P_{v^\perp}ef(t,v)\,\d v=e\cdot M_fJ_f,\]
  where~$M_f$ is the matrix given by~$\int_\mathbb{S}P_{v^\perp}f(t,v)\,\d v$ (it is a symmetric matrix with eigenvalues in~$[0,1]$, as convex combination of orthogonal projections). Since~$M_f$ is continuous in time, then~$J_f$ is~$C^1$, and by the same procedure we can compute the evolution of~$M_f$, which will depend on higher moments of~$f$, to get that~$J_f$ is smooth. More precisely, since any moment is uniformly bounded (the sphere is compact and~$f(t,\cdot)$ is a probability density for all~$t$), we get that all derivatives of~$J_f$ are uniformly bounded in time. Since
  \[\frac12\frac{\d |J_f|^2}{\d t}=J_f\cdot M_fJ_f=\int_\mathbb{S}[|J_f|^2-(v\cdot J_f)^2]f(t,v)\,\d v\geqslant0,\]
  we get the first part of the proposition.

  From now on we suppose that~$J_{f_0}\neq0$, therefore~$\Omega(t)$ is well defined. The function~$\frac12\frac{\d |J_f|^2}{\d t}=|J_f|^2\Omega\cdot M_f\Omega$ being nonnegative, smooth, integrable in~$\mathbb{R}_+$ (since~$|J_f|$ is bounded by~$1$), and with bounded derivative, it is a classical exercise to show that it must converge to~$0$ as~$t\to\infty$. This gives us that~$\Omega\cdot M_f\Omega\to0$ as~$t\to\infty$. Let us now compute the evolution of~$\Omega$. We get
  \begin{equation}\label{omegadot}
    \dot{\Omega}=\frac{1}{|J_f|}\frac{\d J_f}{\d t}-\frac{\d |J_f|}{\d t}\frac{J_f}{|J_f|^2}=M_f\Omega-(\Omega\cdot M_f\Omega)\Omega=P_{\Omega^\perp}(M_f\Omega).
  \end{equation}
  Since~$M_f$ has eigenvalues in~$[0,1]$, we get that~$|M_f\Omega|^2=\Omega\cdot M_f^2\Omega\leqslant\Omega\cdot M_f\Omega$, therefore~$M_f\Omega\to0$ as~$t\to0$. So we get that~$\dot{\Omega}\to0$ as~$t\to\infty$. \qed
\end{proof}
\begin{remark}\label{remark-gradient-flow} The fact that~$|J_f|$ is nondecreasing can be enlightened by the theory of gradient flow in probability spaces~\cite{ambrosio2008gradient}. Indeed, the kinetic equation~\eqref{pde-intro} corresponds to the gradient flow of the functional~$-\frac12|J_f|^2$ for the Wasserstein distance~$W_2$. Therefore the evolution amounts to minimizing in time this quantity. We also remark that since~$|J_f|$ is nondecreasing, by an appropriate change of time, we can recover the equation~$\partial_tf+\nabla_v\cdot(f P_{v^\perp}\Omega)$ which corresponds to the spatial homogeneous version of~\cite{degond2008continuum} without noise. This equation can also be interpreted as a gradient flow~\cite{figalli2018global}.
\end{remark}
The fact that~$\dot{\Omega}\to0$ is not sufficient to prove that~$\Omega$ converges to some~$\Omega_\infty$, we would need~$\dot{\Omega}\in L^1(\mathbb{R}_+)$ and we only have up to now~$\dot{\Omega}\in L^2(\mathbb{R}_+)$ (since we have seen in the proof of Lemma~\ref{lem-increasing} that~$|J_f|^2\Omega\cdot M_f\Omega$ is integrable in time). To fill this gap, one solution is to compute the second derivative of~$\Omega$, and more precisely, to obtain an estimate on~$|\dot{\Omega}|$ corresponding to the assumption of the following lemma, which mainly says that if~$g$ is integrable, then any bounded solution of the differential equation~$y'=y+g$ has to be integrable.

\begin{lemma}\label{lem-L1} Let~$y:\mathbb{R}_+\to\mathbb{R}$ be a nonnegative function such that~$y^2$ is~$C^1$ and bounded. We suppose that there exists a function~$g\in L^1(\mathbb{R}_+)$ such that for all~$t\in\mathbb{R}$, we have
  \begin{equation}
    \frac12\frac{\d}{\d t}y^2=y^2+y\,g.\label{eq-exploding-ode-L1}
  \end{equation}
  Then~$y\in L^1(\mathbb{R}_+)$.
\end{lemma}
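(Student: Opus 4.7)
The plan is to treat~\eqref{eq-exploding-ode-L1} as an inhomogeneous linear ODE in $y^2$, solve it with the integrating factor $e^{-2t}$, and then localize the resulting identity on the open set $\{y>0\}$ to recover a pointwise ODE for $y$ itself.

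First, rewriting~\eqref{eq-exploding-ode-L1} as $(e^{-2t}y^2)'=2e^{-2t}\,y g$ and integrating, the boundedness of $y^2$ pins down the constant of integration: since $\int_0^\infty e^{-2s}y(s)g(s)\,\mathrm d s$ converges absolutely ($y$ is bounded and $g\in L^1$), the only bounded solution is
$$y(t)^2 \;=\; -2\int_t^\infty e^{-2(s-t)}\, y(s)\, g(s)\,\mathrm d s.$$
Bounding this in absolute value by $2\|y\|_\infty \int_t^\infty |g(s)|\,\mathrm d s$ yields the preliminary decay $y(t)\to 0$ as $t\to+\infty$.

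Next, since $y=\sqrt{y^2}$ is continuous, the zero set $Z=\{y=0\}$ is closed, and its complement decomposes into at most countably many open intervals $(a_n,b_n)\subset[0,\infty)$. On each such component $y^2$ is $C^1$ and strictly positive, so $y$ is $C^1$, and dividing~\eqref{eq-exploding-ode-L1} by $2y$ produces the pointwise ODE $y'(t)=y(t)+g(t)$. Integrating this identity over $(a_n,b_n)$ gives
$$\int_{a_n}^{b_n} y(t)\,\mathrm d t \;=\; y(b_n^-) - y(a_n^+) - \int_{a_n}^{b_n} g(t)\,\mathrm d t,$$
with $y(a_n^+)=0$ when $a_n>0$ and $y(b_n^-)=0$ when $b_n<+\infty$ by continuity, while $y(b_n^-)=0$ when $b_n=+\infty$ by the preliminary decay. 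The only potentially nonzero boundary term is $y(0)$, arising in at most one component; but the non-negativity $\int_{a_n}^{b_n} y\ge 0$ then forces $|\int_{a_n}^{b_n} g|\ge y(0)$, so that in every case one obtains
$$\int_{a_n}^{b_n} y(t)\,\mathrm d t \;\le\; \int_{a_n}^{b_n} |g(t)|\,\mathrm d t.$$
Summing over $n$ and recalling that $y$ vanishes outside $\bigcup_n(a_n,b_n)$ yields $\|y\|_{L^1(\mathbb{R}_+)}\le\|g\|_{L^1(\mathbb{R}_+)}<\infty$.

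The main obstacle is the zero set of $y$: the ODE $y'=y+g$ is only a consequence of~\eqref{eq-exploding-ode-L1} at points where $y>0$, so one cannot apply a single Duhamel formula on all of $[0,\infty)$. Decomposing $\{y>0\}$ into its connected components, together with the continuity of $y$ at interior endpoints and the decay at infinity established in the first step, is exactly the tool needed to localize the estimate and to absorb the potential boundary contribution $y(0)$ into the $L^1$ norm of $g$.
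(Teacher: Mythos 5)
Your argument is correct, and all the delicate points check out: on each connected component of the open set $\{y>0\}$ the division by $y$ is legitimate because $y=\sqrt{y^2}$ is $C^1$ and positive there; the right endpoint contributes nothing (by continuity of $y$ at a finite endpoint of a maximal component, or by your preliminary decay on an unbounded one, where the passage $s\to+\infty$ in $\int_{a_n}^s y=y(s)-y(a_n^+)-\int_{a_n}^s g$ is justified by monotone convergence since $y\geqslant0$); and the only possibly nonzero left boundary value $y(0)$ enters with a minus sign, so in fact no extra argument is needed there — the bound $\int_{a_n}^{b_n}y\leqslant\int_{a_n}^{b_n}|g|$ is immediate. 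Your route shares the paper's core mechanism (exploit $y'=y+g$ on the positivity set and kill the dangerous boundary term at the forward zero or at infinity), but the execution is genuinely different. The paper fixes a time $t$ with $y(t)>0$, lets $T$ be the right end of the maximal interval of positivity, and applies Duhamel's formula $y(s)e^{-s}-y(t)e^{-t}=\int_t^s g(u)e^{-u}\,\d u$; evaluating at $s=T$ (where $y$ vanishes, or using only boundedness of $y$ if $T=+\infty$) yields the pointwise bound $y(t)\leqslant\int_t^\infty|g(u)|e^{t-u}\,\d u$, and Fubini then gives $\int_0^\infty y\leqslant\int_0^\infty|g(u)|(1-e^{-u})\,\d u$. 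That version needs no decomposition of $\{y>0\}$ into components, no preliminary decay statement, and produces a pointwise estimate with a decaying kernel. Your version trades Duhamel and Fubini for a countable component-wise integration, yielding the clean global bound $\|y\|_{L^1}\leqslant\|g\|_{L^1}$; the first step with the integrating factor $e^{-2t}$ applied directly to the $y^2$ equation (giving $y(t)\to0$) is a nice self-contained way to handle the unbounded component, though mere boundedness of $y$ would already make $\int_{a_n}^s y$ uniformly bounded and hence finite in the limit. Both arguments are elementary and of comparable length; the paper's gives slightly more (a pointwise decay-type bound on $y$), yours gives a slightly cleaner final constant.
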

\begin{proof}
  Let~$t\geqslant0$ such that~$y(t)>0$. We set~$T=\sup\{s\geqslant t, y>0\text{ on }[t,s]\}$ (we may have~$T=+\infty$).

  We have that~$y$ is~$C^1$, positive and bounded on~$[t,T)$, and satisfies the differential equation~$y'=y+g$, therefore by Duhamel’s formula we have, for~$s\in[t,T)$:
  \[y(s)e^{-s}-y(t)e^{-t}=\int_t^sg(u)e^{-u}\d u.\]
  Letting~$s=T$ (resp.~$s\to+\infty$ if~$T=+\infty$), since~$y(T)=0$ (resp.~$y$ is bounded), we obtain
  \[y(t)=-\int_t^Tg(u)e^{t-u}\d u\leqslant\int_t^\infty|g(u)|e^{t-u}\d u.\]
  This equality being true for any~$t\in\mathbb{R}_+$ (even if~$y(t)=0$), we have by Fubini’s theorem that
  \[\int_0^\infty y(t)\d t\leqslant\int_0^\infty\int_t^\infty|g(u)|e^{t-u}\d u\, \d t=\int_0^\infty|g(u)|(1-e^{-u})\d u,\]
  which is finite by integrability of~$g$.
  \qed
\end{proof}
We are now ready to prove the convergence of~$\Omega$.
\begin{proposition}\label{prop-omega-converges}If~$J_{f_0}\neq0$, then~$\dot{\Omega}\in L^1(\mathbb{R}_+)$, and therefore there exists~$\Omega_\infty\in\mathbb{S}$ such that~$\Omega\to\Omega_\infty$ as~$t\to\infty$.
\end{proposition}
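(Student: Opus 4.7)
The plan is to apply Lemma~\ref{lem-L1} with $y=|\dot\Omega|$. Boundedness of $y^2=|\dot\Omega|^2$ is immediate from $|\dot\Omega|\leqslant|M_f\Omega|\leqslant 1$, and smoothness is inherited from $\Omega\in C^\infty$. So the work is to produce a differential equation $\frac12(y^2)'=y^2+yg$ with $g\in L^1(\mathbb{R}_+)$.

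The natural $L^1$ driver is $\Omega\cdot M_f\Omega$. Indeed, the identity $\frac12(|J_f|^2)'=|J_f|^2(\Omega\cdot M_f\Omega)$ from the proof of Lemma~\ref{lem-increasing}, combined with $0<|J_{f_0}|\leqslant|J_f|\leqslant 1$, gives $\int_0^\infty\Omega\cdot M_f\Omega\,\d t=\log\bigl(\lim_{t\to\infty}|J_f|/|J_{f_0}|\bigr)<\infty$. Since $|\dot\Omega|^2\leqslant\Omega\cdot M_f\Omega$ (because $M_f^2\leqslant M_f$), this already yields $|\dot\Omega|\in L^2$, but not yet $L^1$.

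Differentiating \eqref{omegadot} and taking the dot product with $\dot\Omega$, using $\dot\Omega\perp\Omega$ to kill the $\Omega$-parallel contributions, I obtain
\begin{equation*}
\frac12(|\dot\Omega|^2)'=\dot\Omega\cdot M_f\dot\Omega+\dot\Omega\cdot\dot M_f\Omega-(\Omega\cdot M_f\Omega)|\dot\Omega|^2.
\end{equation*}
Writing $N_f=I-M_f$ (so $N_f w=\int_\mathbb{S}(v\cdot w)v\,f\,\d v$), the first piece becomes $|\dot\Omega|^2-\dot\Omega\cdot N_f\dot\Omega$, which brings the equation to the target form $\frac12(|\dot\Omega|^2)'=|\dot\Omega|^2+|\dot\Omega|g$ with
\begin{equation*}
|\dot\Omega|\,g=-\dot\Omega\cdot N_f\dot\Omega-(\Omega\cdot M_f\Omega)|\dot\Omega|^2+\dot\Omega\cdot\dot M_f\Omega.
\end{equation*}
I would then bound each term by a multiple of $\Omega\cdot M_f\Omega$: (a) since $\dot\Omega\perp\Omega$, $(v\cdot\dot\Omega)^2\leqslant(1-(v\cdot\Omega)^2)|\dot\Omega|^2$, hence $\dot\Omega\cdot N_f\dot\Omega\leqslant|\dot\Omega|^2(\Omega\cdot M_f\Omega)$; (b) the second term is trivially $\leqslant|\dot\Omega|(\Omega\cdot M_f\Omega)$; (c) from $\dot M_f\Omega=-\dot N_f\Omega$ and the weak formulation \eqref{pde-weak} applied to $v\mapsto v_i(v\cdot\Omega(t))$, with care for the explicit time dependence of $\Omega$, the third-moment cross terms combine to give
\begin{equation*}
\dot N_f\Omega=2|J_f|\int_\mathbb{S}v\bigl(1-(v\cdot\Omega)^2\bigr)f\,\d v,
\end{equation*}
so that $|\dot M_f\Omega|\leqslant 2(\Omega\cdot M_f\Omega)$.

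Combining (a)--(c) and using $|\dot\Omega|\leqslant 1$, I get $|g|\leqslant C(\Omega\cdot M_f\Omega)\in L^1$ (setting $g=0$ where $\dot\Omega=0$, a case in which the left-hand side vanishes as well). Lemma~\ref{lem-L1} then delivers $|\dot\Omega|\in L^1$, and the convergence $\Omega(t)\to\Omega_\infty\in\mathbb{S}$ follows from the Cauchy criterion applied to $\Omega(t)=\Omega(0)+\int_0^t\dot\Omega\,\d s$. The main obstacle is step~(c): establishing the clean identity for $\dot N_f\Omega$ requires a careful moment calculation in which the third-moment cross terms collapse onto the geometrically meaningful combination $v|P_{v^\perp}\Omega|^2$, producing a bound by the integrable quantity $\Omega\cdot M_f\Omega$ rather than by a mere constant.
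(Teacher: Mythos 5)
Your proposal is correct and follows essentially the same route as the paper: apply Lemma~\ref{lem-L1} to $y=|\dot\Omega|$ after computing the second derivative of $\Omega$ and showing that the forcing term is controlled by the integrable quantity $\Omega\cdot M_f\Omega$.

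The one place worth comparing is step (c), which you flag as the main obstacle. Your identity
$\dot N_f\Omega=2|J_f|\int_\mathbb{S}v\bigl(1-(v\cdot\Omega)^2\bigr)f\,\d v$
is correct: from the paper's \eqref{ddtMOmega} one has $\dot M_f\Omega=2|J_f|\langle(\Omega\cdot v)^2v\rangle_f-2|J_f|^2\Omega$, and substituting $2|J_f|^2\Omega=2|J_f|\langle v\rangle_f$ collapses this to $\dot M_f\Omega=-2|J_f|\langle(1-(\Omega\cdot v)^2)v\rangle_f=-\dot N_f\Omega$, yielding $|\dot M_f\Omega|\leqslant 2|J_f|\langle 1-(\Omega\cdot v)^2\rangle_f\leqslant 2(\Omega\cdot M_f\Omega)$. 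The paper organizes the same cancellation differently: it keeps the term $2|J_f|\langle(\Omega\cdot v)^2\,u\cdot v\rangle_f$ (with $u=\dot\Omega/|\dot\Omega|$) and uses $\langle u\cdot v\rangle_f=u\cdot J_f=0$ to replace $(\Omega\cdot v)^2$ by $(\Omega\cdot v)^2-1$ before bounding by $\langle 1-(\Omega\cdot v)^2\rangle_f$. Your version absorbs that subtraction into the identity for $\dot N_f\Omega$ up front; the resulting bound on $g$ is the same up to the constant ($4$ vs.\ $3$). Both handle the degenerate set $\{\dot\Omega=0\}$ the same way, by setting $g$ (resp.\ $u$) to zero there. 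Note in passing that the displayed line~\eqref{ddtOmegadot2} in the paper has a sign typo on the $2|J_f|\langle\cdot\rangle_f$ term (it should be $+$, consistent with your derivation), but this has no effect since only $|g|$ is estimated.
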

\begin{proof}
  We first compute the derivative of~$M_f$. For convenience, we use the notation~$\langle\varphi(v)\rangle_f$ for~$\int_\mathbb{S}\varphi(v) f(t,v)\,\d v$. Therefore we have~$J_f=\langle v\rangle_f$ and~$M_f=\langle P_{v^\perp}\rangle_f$, and the weak formulation~\eqref{pde-weak} reads
  \begin{equation*}\frac{\d}{\d t}\langle\varphi(v)\rangle_f=J_f\cdot\langle\nabla_v\varphi(v)\rangle_f.\label{pde-weak-short}
  \end{equation*}
  We have, for fixed~$e_1,e_2\in\mathbb{R}^n$:
  \[e_1\cdot M_fe_2=\langle e_1\cdot P_{v^\perp}e_2\rangle_f=e_1\cdot e_2-\langle(e_1\cdot v)(e_2\cdot v)\rangle_f.\]
  Therefore, since~$\nabla_v(e\cdot v)=P_{v^\perp}e$, we obtain
  \begin{align*}
    \frac{\d}{\d t}(e_1\cdot M_fe_2)&=-J_f\cdot\langle(e_2\cdot v)P_{v^\perp}e_1+(e_1\cdot v)P_{v^\perp}e_2\rangle_f\\&=e_1\cdot[-\langle(e_2\cdot v)P_{v^\perp}J_f\rangle_f+\langle J_f\cdot P_{v^\perp}e_2 \, v\rangle_f],
  \end{align*}
  so the term in between the brackets is the derivative of~$M_fe_2$. We then get
  \begin{align}
    \frac{\d}{\d t}(M_f\Omega)&=M_f\dot{\Omega}-|J_f|\langle(\Omega\cdot v)P_{v^\perp}\Omega\rangle_f-|J_f|\langle\Omega\cdot P_{v^\perp}\Omega\, v\rangle_f\nonumber\\
                       &=M_f\dot{\Omega}+2|J_f|\langle(\Omega\cdot v)^2v\rangle_f-|J_f|[\langle(\Omega\cdot v)\Omega+v\rangle_f]\nonumber\\
    &=M_f\dot{\Omega}+2|J_f|\langle(\Omega\cdot v)^2v\rangle_f-2|J_f|^2\Omega.\label{ddtMOmega}
  \end{align}
  Thanks to~\eqref{omegadot}, we finally have
  \begin{align*}\frac{\d}{\d t}\dot{\Omega}&=\frac{\d}{\d t}(M_f\Omega)-(\Omega\cdot M_f\Omega)\dot{\Omega}-(\dot{\Omega}\cdot M_f\Omega)\Omega-\Omega\cdot\frac{\d}{\d t}(M_f\Omega)\, \Omega\\
    &=P_{\Omega^\perp}\frac{\d}{\d t}(M_f\Omega)-(\Omega\cdot M_f\Omega)\dot{\Omega}-(\dot{\Omega}\cdot M_f\Omega)\Omega.
  \end{align*}
  Since~$\Omega$ and~$\dot{\Omega}$ are orthogonal, we have some simplifications by taking the dot product with~$\dot{\Omega}$ and using~\eqref{ddtMOmega}:
  \begin{align}
    \dot{\Omega}\cdot\frac{\d}{\d t}\dot{\Omega}&=\dot{\Omega}\cdot\frac{\d}{\d t}(M_f\Omega)-(\Omega\cdot M_f\Omega)|\dot{\Omega}|^2.\nonumber\\
                                  &=\dot{\Omega}\cdot M_f\dot{\Omega}-2|J_f|[\langle(\Omega\cdot v)^2\, \dot{\Omega}\cdot v\rangle_f]-(\Omega\cdot M_f\Omega)|\dot{\Omega}|^2\nonumber\\
    &=|\dot{\Omega}|^2-\langle(\dot{\Omega}\cdot v)^2\rangle_f-(\Omega\cdot M_f\Omega)|\dot{\Omega}|^2-2|J_f|[\langle(\Omega\cdot v)^2\, \dot{\Omega}\cdot v\rangle_f].\label{ddtOmegadot2}
  \end{align}
  If we define~$u$ to be the unit vector~$\frac{\dot{\Omega}}{|\dot{\Omega}|}$ when~$|\dot{\Omega}|\neq0$ and to be zero if~$|\dot{\Omega}|=0$, and we set
  \begin{equation}\label{defg}
    g(t)=-|\dot{\Omega}|[\langle(u\cdot v)^2\rangle_f+(\Omega\cdot M_f\Omega)]-2|J_f|\langle(\Omega\cdot v)^2)u\cdot v\rangle_f,
  \end{equation}
  we get that the formula~\eqref{ddtOmegadot2} is written under the following form, corresponding to~\eqref{eq-exploding-ode-L1} with~$y=|\dot{\Omega}|$:
  \[\frac12\frac{\d}{\d t}|\dot{\Omega}|^2=|\dot{\Omega}|^2+|\dot{\Omega}|g(t).\]
  Our goal is to show that~$g\in L^1(\mathbb{R}_+)$ in order to apply Lemma~\ref{lem-L1}. Indeed, thanks to~\eqref{omegadot}, we have that~$|\dot{\Omega}|\leqslant1$ (recall that~$M_f$ is a symmetric matrix with eigenvalues in~$[0,1]$), and~$|\dot{\Omega}|^2$ is~$C^1$.

  As was remarked before in the proof of Lemma~\ref{lem-increasing}, the quantity~$|J_f|^2\Omega\cdot M_f\Omega$ is integrable in time, which gives that~$\Omega\cdot M_f\Omega=\langle1-(\Omega\cdot v)^2\rangle_f$ is integrable. Since~$u$ is colinear to~$\dot{\Omega}$, which is orthogonal to~$\Omega$, we have that~$P_{\Omega^\perp}u=u$, and therefore we get (using the fact that~$|u|\leqslant1$, since~$|u|$ is~$1$ or~$0$)
  \[\langle(u\cdot v)^2\rangle_f=\langle(u\cdot P_{\Omega^\perp}v)^2\rangle_f\leqslant\langle|P_{\Omega^\perp}v|^2\rangle_f=\langle1-(\Omega\cdot v)^2\rangle_f.\]
  This gives that the first term in the definition~\eqref{defg} of~$g$ is integrable in time. Finally, since~$u\cdot\Omega=0$, we have that~$\langle u\cdot v\rangle_f=0$, and we get
  \[|\langle(\Omega\cdot v)^2\,u\cdot v\rangle_f|=|\langle(1-(\Omega\cdot v)^2)u\cdot v\rangle_f|\leqslant\langle1-(\Omega\cdot v)^2\rangle_f,\]
since~$1-(\Omega\cdot v)^2\geqslant0$ and~$|u\cdot v|\leqslant1$ for all~$v\in\mathbb{S}$. This gives that the last term in the definition~\eqref{defg} of~$g$ is also integrable in time. In virtue of Lemma~\ref{lem-L1}, we then get that~$|\dot{\Omega}|$ is integrable. Therefore~$\Omega(t)=\Omega(0)+\int_0^t\dot{\Omega}(s)\d s$ converges as~$t\to+\infty$.
  \qed
\end{proof}

In order to control the distance between~$f$ and~$\delta_{\Omega_\infty}$, we now need to understand the properties of the flow of the differential equation~$\frac{\d v}{\d t}=P_{v^\perp} J_f$.
\begin{proposition}\label{prop-vback}Let~$\mathcal{J}$ be a continuous function~$\mathbb{R}_+\to\mathbb{R}^n$ such that~$t\mapsto|\mathcal{J}(t)|$ is positive, bounded and nondecreasing, and~$\Omega(t)=\frac{\mathcal{J}(t)}{|\mathcal{J}(t)|}$ converges to~$\Omega_\infty\in\mathbb{S}$ as~$t\to\infty$.

  Then there exists a unique~$v_{\mathrm{back}}\in\mathbb{S}$ such that the solution of the differential equation~$\frac{\d v}{\d t}=P_{v^\perp} \mathcal{J}$ with initial condition~$v(0)=v_{\mathrm{back}}$ satisfies~$v(t)\to-\Omega_\infty$ as~$t\to+\infty$. Furthermore, for all~$v_0\neq v_{\mathrm{back}}$, the solution of this differential equation with initial condition~$v(0)=v_0$ converges to~$\Omega_\infty$ as~$t\to+\infty$.
\end{proposition}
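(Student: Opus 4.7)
The plan rests on a key algebraic identity for two solutions $v_1,v_2$ of $\dot v = P_{v^\perp}\mathcal{J}$: a direct computation gives
\[\tfrac{d}{dt}(v_1\cdot v_2) = [(v_1+v_2)\cdot\mathcal{J}]\,(1-v_1\cdot v_2),\]
so $q(t) := 1 - v_1(t)\cdot v_2(t) \in [0,2]$ satisfies a linear scalar ODE with an explicit Duhamel representation. Uniqueness of $v_{\mathrm{back}}$ is then immediate: if two distinct initial conditions both led to trajectories converging to $-\Omega_\infty$, then $(v_1+v_2)\cdot\mathcal{J} \to -2|\mathcal{J}_\infty|$ (with $|\mathcal{J}_\infty| := \lim|\mathcal{J}|>0$ since $|\mathcal{J}|$ is positive and nondecreasing), the exponential factor blows up, and this contradicts $q\leqslant 2$ unless $q(0)=0$.

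I would next establish the dichotomy that every trajectory converges to either $\Omega_\infty$ or $-\Omega_\infty$. For this I would analyze $\phi(t) = v(t)\cdot\Omega_\infty$; writing $w = P_{\Omega_\infty^\perp}v$ (so $|w|^2 = 1-\phi^2$) and $\psi = \Omega\cdot\Omega_\infty\to 1$, the ODE becomes
\[\dot\phi = |\mathcal{J}|\psi(1-\phi^2) - |\mathcal{J}|\phi\,(w\cdot P_{\Omega_\infty^\perp}\Omega),\]
so $|\dot\phi - |\mathcal{J}|\psi(1-\phi^2)| \leqslant |\mathcal{J}|\sqrt{1-\phi^2}\sqrt{1-\psi^2}$. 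Since $|\mathcal{J}|\psi$ is eventually bounded below by some $c>0$ while $\sqrt{1-\psi^2}$ becomes arbitrarily small, a case split on whether $\sqrt{1-\phi^2}$ dominates a multiple of $\sqrt{1-\psi^2}$ shows that, past some time, $\phi$ is monotonically pushed out of every fixed compact subinterval of $(-1,1)$. Once $\phi$ has entered a small neighborhood of $1$, the perturbed contractive structure on $h_+ = 1-\phi$ forces $\phi\to 1$, hence $v\to\Omega_\infty$; otherwise $\phi$ stays in $[-1,-1+\eta]$ for every small $\eta$, i.e., $v\to-\Omega_\infty$.

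For existence of $v_{\mathrm{back}}$, I would use that the flow $\Phi_t$ is a homeomorphism of $\mathbb{S}$ and define $v_T^* := \Phi_T^{-1}(-\Omega_\infty)$ for each $T\geqslant 0$. By compactness of $\mathbb{S}$, the family $(v_T^*)$ has an accumulation point $v^* = \lim_k v_{T_k}^*$. Setting $h := 1+v\cdot\Omega_\infty$, the lower bound $\dot h \geqslant 2|\mathcal{J}|\psi h - |\mathcal{J}|\sqrt{2h}\sqrt{1-\psi^2}$ combined with the substitution $y=\sqrt h$ becomes a linear differential inequality $\dot y \geqslant |\mathcal{J}|\psi\, y - \tfrac{|\mathcal{J}|}{\sqrt 2}\sqrt{1-\psi^2}$, which, integrated backward from the terminal condition $y(T_k;v_{T_k}^*)=0$ by Gronwall, yields
\[\sqrt{h(t^*;v_{T_k}^*)} \leqslant C\sup_{s\geqslant t^*}\sqrt{1-\psi(s)^2}\]
uniformly in $k$ for $T_k\geqslant t^*$ large. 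Passing to the limit $k\to\infty$ via continuous dependence on initial data on the compact interval $[0,t^*]$ transfers the bound to $v^*$; letting $t^*\to\infty$ then gives $v(\cdot;v^*)\to-\Omega_\infty$. Uniqueness rules out any other accumulation point, so $v_T^*\to v_{\mathrm{back}}:=v^*$. Finally, for any $v_0\neq v_{\mathrm{back}}$, uniqueness forbids $v(\cdot;v_0)\to-\Omega_\infty$, and the dichotomy delivers $v(\cdot;v_0)\to\Omega_\infty$.

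The main obstacle is the existence step: constructing a trajectory asymptotic to an expanding equilibrium of a non-autonomous flow. The delicate part is organizing the double limit (first $k\to\infty$ with $t^*$ fixed, then $t^*\to\infty$) so that continuous dependence on initial data on compact intervals and the asymptotic smallness of $\sqrt{1-\psi^2}$ are invoked in the correct order. All remaining ingredients — the identity for $q$, the dichotomy for $\phi$, and the contraction/expansion rates near $\pm\Omega_\infty$ — reduce to elementary ODE estimates.
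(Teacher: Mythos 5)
Your proposal is correct and follows the paper's overall strategy (dichotomy of limits, backward preimages of $-\Omega_\infty$ plus compactness for existence, and the repulsion identity $\frac{\d}{\d t}\|v-\tilde v\|^2=-\mathcal{J}\cdot(v+\tilde v)\,\|v-\tilde v\|^2$ for uniqueness — your identity for $v_1\cdot v_2$ is exactly the paper's), but it closes the existence step differently. The paper argues softly: it first proves that convergence to $\Omega_\infty$ is locally uniform in the initial condition (via the comparison ODE $y'=\lambda(1-y^2)-\varepsilon$), and then gets a contradiction if the accumulation point $v^*$ of $\Phi_{t_n}^{-1}(-\Omega_\infty)$ had $\Phi_t(v^*)\to\Omega_\infty$, since nearby preimages are sent to $-\Omega_\infty$ at arbitrarily late times. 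You instead prove directly and quantitatively that $\Phi_t(v^*)\to-\Omega_\infty$, via a backward Gr\"onwall bound on $y=\sqrt{1+v\cdot\Omega_\infty}$ integrated from the terminal condition $y(T_k)=0$, transferred to $v^*$ by continuous dependence on compact time intervals; this is a legitimate alternative, and it buys a little more than the paper states (an explicit modulus $\sqrt{1+\Phi_t(v_{\mathrm{back}})\cdot\Omega_\infty}\lesssim\sup_{s\geqslant t}\sqrt{1-\psi(s)^2}$, and convergence of the whole family $\Phi_T^{-1}(-\Omega_\infty)$ without invoking the uniform-convergence lemma). Two details need patching, both with tools you already have: the lower bound $\dot h\geqslant 2|\mathcal{J}|\psi h-|\mathcal{J}|\sqrt{2h}\sqrt{1-\psi^2}$ is not correct as written, since $\psi(1-\phi^2)=\psi(2-h)h\leqslant 2\psi h$; what is true is $\dot h\geqslant|\mathcal{J}|\psi(2-h)h-|\mathcal{J}|\sqrt{(2-h)h}\,\sqrt{1-\psi^2}$, which gives the needed linear inequality $\dot y\geqslant\frac{|\mathcal{J}|\psi}{2}y-\frac{|\mathcal{J}|}{\sqrt2}\sqrt{1-\psi^2}$ only on the region $h\leqslant1$ (the constant in the exponent is irrelevant, so this is harmless). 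Consequently you must also justify that, for $t^*$ large, any trajectory with $h(T_k)=0$, $T_k\geqslant t^*$, satisfies $h\leqslant1$ on $[t^*,T_k]$; this follows from your own push-out estimate, which shows that $\{\phi\geqslant0\}$ is forward invariant once $\psi$ is close enough to $1$, so such a trajectory cannot have visited that region after time $t^*$. With these two adjustments the argument is complete.
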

\begin{proof} The outline of the proof is the following: we first show that any solution satisfies either~$v(t)\to-\Omega_\infty$ or~$v(t)\to\Omega_\infty$, then we construct~$v_{\mathrm{back}}$, and finally we prove that it is unique. We still denote by~$\Phi_t$ the flow of the differential equation~\eqref{ode-flow}.
  
  We first notice that~$|\mathcal{J}(t)|$ converges to some~$\lambda>0$, therefore~$\mathcal{J}(t)$ converges to~$\lambda\Omega_\infty$ as~$t\to\infty$. Therefore the solution of the equation~$\frac{\d v}{\d t}=P_{v^\perp} \mathcal{J}$ with initial condition~$v(0)=v_0$ is also the solution of a differential equation of the form
  \begin{equation}
    \frac{\d v}{\d t}=\lambda P_{v^\perp} \Omega_\infty + r_{v_0}(t),\label{ode-asymp}
  \end{equation}
  where the remainder term~$r_{v_0}(t)$ converges to~$0$ as~$t\to\infty$, uniformly in~$v_0\in\mathbb{S}$. Let us suppose that~$v(t)$ does not converge to~$-\Omega_\infty$ (that is to say~$v(t)\cdot\Omega_\infty$ does not converge to~$-1$), and let us prove that in this case~$v(t)\to\Omega_\infty$. Taking the dot product with~$\Omega_\infty$ in~\eqref{ode-asymp}, we obtain
  \begin{equation}
    \frac{\d}{\d t}(v\cdot\Omega_\infty)=\lambda[1-(v\cdot\Omega_\infty)^2] + \Omega_\infty\cdot r_{v_0}(t),\label{ode-asymp-vOmega}
  \end{equation}
  so we can use a comparison principle with the one-dimensional differential equation~$y'=\lambda(1-y^2)-\varepsilon$. Since~$\lambda(1-y^2)-\varepsilon$ is positive for~$|y|<\sqrt{1-\frac{\varepsilon}{\lambda}}$ and negative for~$|y|>\sqrt{1-\frac{\varepsilon}{\lambda}}$, any solution starting with~$y(t_0)>-\sqrt{1-\frac{\varepsilon}{\lambda}}$ converges to~$\sqrt{1-\frac{\varepsilon}{\lambda}}$ as~$t\to+\infty$. Since~$v(t)\cdot\Omega_\infty$ does not converge to~$-1$, there exists~$\delta>0$ such that~$v(t)\cdot\Omega_\infty>-1+\delta$ for arbitrarily large times~$t$. For any~$\varepsilon>0$ sufficiently small (such that~$-\sqrt{1-\frac{\varepsilon}{\lambda}}<-1+\delta$), there exists~$t_0\geqslant0$ such that~$v(t_0)\cdot\Omega_\infty>-1+\delta$ and~$|\Omega_\infty\cdot r_{v_0}(t)|\leqslant\varepsilon$ for all~$t\geqslant t_0$. By comparison principle, we then get that~$\liminf_{t\to+\infty} v(t)\cdot\Omega_\infty\geqslant\sqrt{1-\frac{\varepsilon}{\lambda}}$. Since this is true for any~$\varepsilon>0$ sufficiently small, we then get that~$v(t)\cdot\Omega_\infty$ converges to~$1$, that is to say~$v(t)\to\Omega_\infty$ as~$t\to+\infty$.

  Let us now prove that if~$v(t)$ converges to~$\Omega_\infty$, then there exists a neighborhood of~$v_0$ such that the convergence to~$\Omega_\infty$ of solutions starting in this neighborhood is uniform in time. This is done thanks to the same comparison principle. We fix~$\delta>0$ and~$\varepsilon>0$ such that~$-1+\delta>-\sqrt{1-\frac{\varepsilon}{\lambda}}$. We take~$t_0\geqslant0$ such that~$v(t_0)\cdot\Omega_\infty>-1+\delta$ and~$|\Omega_\infty\cdot r_{\tilde{v}_0}(t)|\leqslant\varepsilon$ for any~$\tilde{v}_0\in\mathbb{S}$ and~$t\geqslant t_0$. By continuity of the flow of the equation~$\frac{\d v}{\d t}=P_{v^\perp}\mathcal{J}$, there exists a neighborhood~$B$ of~$v_0$ in~$\mathbb{S}$ such that for any~$\tilde{v}_0\in B$, the solution~$\tilde{v}(t)=\Phi_t(\tilde{v_0})$ of this equation with initial condition~$\tilde{v}_0$ satisfies~$\tilde{v}(t_0)\cdot\Omega_\infty>-1+\delta$. We now look at the equation~$y'=\lambda(1-y^2)-\varepsilon$ starting with~$y(t_0)=-1+\delta$, which converges to~$\sqrt{1-\frac{\varepsilon}{\lambda}}>1-\delta$. There exists~$T$ such that~$y(t)\geqslant1-\delta$ for all~$t\geqslant T$. Therefore, by comparison principle with~\eqref{ode-asymp-vOmega} (where~$v_0$ is replaced by~$\tilde{v}_0$), we get that for all~$\tilde{v}_0\in B$, the solution~$\tilde{v}$ satisfies~$\tilde{v}(t)\cdot\Omega_\infty\geqslant1-\delta$ for all~$t\geqslant T$.

  We are now ready to construct~$v_{\mathrm{back}}$. We take~$(t_n)$ a sequence of increasing times such that~$t_n\to+\infty$ and define~$v_{\mathrm{back}}^n$ as the solution at time~$t=0$ of the backwards in time differential equation~$\frac{\d v^n}{\d t}=P_{(v^n)^\perp}\mathcal{J}$ with terminal condition~$v^n(t_n)=-\Omega_\infty$, that is to say~$v_{\mathrm{back}}^n=\Phi_{t_n}^{-1}(-\Omega_\infty)$. Up to extracting a subsequence, we can assume that~$v_{\mathrm{back}}^n$ converges to some~$v_{\mathrm{back}}\in\mathbb{S}$ and we set~$v(t)=\Phi_t(v_{\mathrm{back}})$. By the first part of the proof, we have that either~$v(t)\to\Omega_\infty$ or~$v(t)\to-\Omega_\infty$ as~$t\to+\infty$. The first case is incompatible with the uniform convergence in time. Indeed, in that case, we would have a neighborhood~$B$ of~$v_{\mathrm{back}}$ and a time~$T$ such that for all~$t\geqslant T$ and all~$\tilde{v}\in B$,~$\Phi_t(\tilde{v})\cdot\Omega_\infty\geqslant0$ (by taking~$\delta=1$ in the previous paragraph). Since we can take~$n$ such that~$t_n\geqslant T$ and~$v_{\mathrm{back}}^n\in B$, this is in contradiction with the fact that~$\Phi_{t_n}(v_{\mathrm{back}}^n)=-\Omega_\infty$.

  It remains to prove that~$v_{\mathrm{back}}$ is unique (which implies that~$\Phi_t^{-1}(-\Omega_\infty)$ actually converges to~$v_{\mathrm{back}}$ as~$t\to+\infty$, thanks to the previous paragraph). This is due to a phenomenon of repulsion of two solutions~$v(t)$ and~$\tilde{v}(t)$ when they are close to~$-\Omega(t)$. Indeed, they satisfy
  \[\frac{\d}{\d t}v\cdot\tilde{v}=v\cdot P_{\tilde{v}^\perp}\mathcal{J}+\tilde{v}\cdot P_{v^\perp}\mathcal{J}=\mathcal{J}\cdot(v+\tilde{v})(1-v\cdot\tilde{v}),\]
  which can be written, since~$\|v-\tilde{v}\|^2=2\,(1-v\cdot\tilde{v})$ as
  \begin{equation}
    \frac{\d}{\d t}\|v-\tilde{v}\|^2=\gamma(t)\|v-\tilde{v}\|^2,\label{odevvtilde}
  \end{equation}
  where~$\gamma(t)=-\mathcal{J}(t)\cdot(v(t)+\tilde{v}(t))$. Let us suppose that both~$v(t)=\Phi_t(v_0)$ and~$\tilde{v}(t)=\Phi_t(\tilde{v}_0)$ converge to~$-\Omega_\infty$ as~$t\to+\infty$. Since~$\mathcal{J}(t)\to\lambda\Omega_\infty$ as~$t\to+\infty$, we have~$\gamma(t)\to2\lambda>0$ as~$t\to+\infty$. Therefore the only bounded solution of the linear differential equation~\eqref{odevvtilde} is the constant~$0$, therefore we have~$v=\tilde{v}$, and thus~$v_0=\tilde{v}_0$.
  \qed
\end{proof}

We are now ready to prove the last part of Theorem~\ref{thm-pde}.
\begin{proposition}Let~$v_{\mathrm{back}}$ be given by Proposition~\ref{prop-vback} with~$\mathcal{J}=J_f$ (we suppose~$J_{f_0}\neq0$). We denote by~$m=\int_\mathbb{S}\mathbf{1}_{v=v_{\mathrm{back}}}f_0(v)\d v$ the initial mass of~$\{v_{\mathrm{back}}\}$. Then~$m<\frac12$ and~$W_1(f(t,\cdot),(1-m)\delta_{\Omega_\infty}+m\delta_{-\Omega_\infty})\to0$ as~$t\to+\infty$.
\end{proposition}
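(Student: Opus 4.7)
The plan is to exploit the pushforward representation $f(t,\cdot)=\Phi_t\#f_0$ together with the dichotomy established in Proposition~\ref{prop-vback}: every trajectory $\Phi_t(v_0)$ either tends to $\Omega_\infty$ or to $-\Omega_\infty$, the latter happening only when $v_0=v_{\mathrm{back}}$. Thanks to Lemma~\ref{lem-increasing} and Proposition~\ref{prop-omega-converges}, $\mathcal{J}=J_f$ satisfies the hypotheses of Proposition~\ref{prop-vback} (positive nondecreasing modulus, convergence of the direction to $\Omega_\infty$), so this dichotomy is available.

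First I would split the initial datum as $f_0=m\,\delta_{v_{\mathrm{back}}}+\nu_0$, where $\nu_0:=f_0-m\,\delta_{v_{\mathrm{back}}}$ is a nonnegative measure of total mass $1-m$ that assigns zero mass to the singleton $\{v_{\mathrm{back}}\}$ by the very definition of $m$. For an arbitrary $\varphi\in C(\mathbb{S})$, the pushforward formula~\eqref{pushforward} gives
\[\int_{\mathbb{S}}\varphi(v)f(t,v)\,\d v=m\,\varphi(\Phi_t(v_{\mathrm{back}}))+\int_{\mathbb{S}}\varphi(\Phi_t(w))\,\d\nu_0(w).\]
The first summand converges to $m\,\varphi(-\Omega_\infty)$ by the defining property of $v_{\mathrm{back}}$. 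For the second, Proposition~\ref{prop-vback} yields $\Phi_t(w)\to\Omega_\infty$ for $\nu_0$-almost every $w$ (as $\nu_0(\{v_{\mathrm{back}}\})=0$), while $|\varphi\circ\Phi_t|\leqslant\|\varphi\|_\infty$ uniformly; dominated convergence then gives the limit $(1-m)\,\varphi(\Omega_\infty)$. Since this holds for every continuous test function, we obtain weak convergence of $f(t,\cdot)$ to $\mu_\infty:=(1-m)\delta_{\Omega_\infty}+m\delta_{-\Omega_\infty}$, which is equivalent to $W_1(f(t,\cdot),\mu_\infty)\to0$.

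To deduce $m<\tfrac12$, I would specialize the above convergence to the linear test functions $\varphi(v)=v\cdot e$ for $e\in\mathbb{R}^n$, obtaining $J_f(t)\to(1-2m)\,\Omega_\infty$. On the other hand, by Lemma~\ref{lem-increasing} the scalar $|J_f(t)|$ is nondecreasing and bounded below by $|J_{f_0}|>0$, hence admits a strictly positive limit $\lambda$; combined with $\Omega(t)\to\Omega_\infty$ this gives $J_f(t)\to\lambda\,\Omega_\infty$ with $\lambda>0$. Identifying the two limits forces $\lambda=1-2m$, so that $m<\tfrac12$.

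The argument is essentially a dominated-convergence trick applied to the decomposition of $f_0$ into its atom at $v_{\mathrm{back}}$ and the rest, and once the deep work of Propositions~\ref{prop-omega-converges} and~\ref{prop-vback} is in hand I do not expect any real obstacle. The only delicate point is ensuring that $\nu_0$ genuinely charges no neighborhood of $v_{\mathrm{back}}$ in the limit sense needed to invoke dominated convergence, but this is precisely the statement that $\nu_0(\{v_{\mathrm{back}}\})=0$, which is automatic by construction of $m$ as the mass of the singleton.
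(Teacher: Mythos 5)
Your proposal is correct and follows essentially the same route as the paper: decompose $f_0$ into its atom at $v_{\mathrm{back}}$ plus the rest, push forward along $\Phi_t$, apply Proposition~\ref{prop-vback} and dominated convergence to get weak convergence (equivalently $W_1$ convergence, as the paper notes $W_1$ metrizes weak convergence), and test against $\varphi(v)=v\cdot e$ together with $\Omega(t)\to\Omega_\infty$ and $|J_f|\geqslant|J_{f_0}|>0$ to force $1-2m>0$. No gaps.
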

\begin{proof}
  We write~$f_\infty=(1-m)\delta_{\Omega_\infty}+m\delta_{-\Omega_\infty}$. Let~$\varphi\in\mathrm{Lip}_1(\mathbb{S})$. We have
  \[\begin{split}\int_\mathbb{S}\varphi(v)f_\infty(v)\,\d v&=m\varphi(-\Omega_\infty)+(1-m)\varphi(\Omega_\infty)\\ &=m\varphi(-\Omega_\infty)+\int_\mathbb{S}\mathbf{1}_{v\neq v_{\mathrm{back}}}\varphi(\Omega_\infty)f_0(v)\,\d v,\end{split}\]
  and~$\int_\mathbb{S}\varphi(v)f(t,v)\,\d v=\int_\mathbb{S}\varphi(\Phi_t(v))f_0(v)\,\d v$ (recall that~$f(t,\cdot)=\Phi_t\#f_0$ is characterized by~\eqref{pushforward}, where~$\Phi_t$, defined in~\eqref{ode-flow} is the flow of the differential equation~$\frac{\d v}{\d t}=P_{v^\perp}\mathcal{J}$). Therefore we get
  \begin{equation}\label{eqphisplitted}\int_\mathbb{S}\varphi(v)f(t,v)\d v=m\varphi(\Phi_t(v_{\mathrm{back}}))+\int_\mathbb{S}\mathbf{1}_{v\neq v_{\mathrm{back}}}\varphi(\Phi_t(v))f_0(v)\,\d v.
  \end{equation}
  We then obtain
  \[\begin{split}\Big|\int_\mathbb{S}&\varphi(v)f(t,v)\,\d v-\int_\mathbb{S}\varphi(v)f_\infty(v)\,\d v|\\
&\leqslant m|\varphi(\Phi_t(v_{\mathrm{back}}))-\varphi(-\Omega_\infty)|+\int_\mathbb{S}\mathbf{1}_{v\neq v_{\mathrm{back}}}|\varphi(\Phi_t(v))-\varphi(\Omega_\infty)|f_0(v)\,\d v\\
      &\leqslant m|\Phi_t(v_{\mathrm{back}})+\Omega_\infty|+\int_\mathbb{S}\mathbf{1}_{v\neq v_{\mathrm{back}}}|\Phi_t(v)-\Omega_\infty|f_0(v)\,\d v, \end{split} \]
since~$\varphi\in\mathrm{Lip}_1(\mathbb{S})$. We finally get \begin{equation}\label{estimateW1}W_1(f(t,\cdot),f_\infty)\leqslant m|\Phi_t(v_{\mathrm{back}})+\Omega_\infty|+\int_\mathbb{S}\mathbf{1}_{v\neq v_{\mathrm{back}}}|\Phi_t(v)-\Omega_\infty|f_0(v)\,\d v.
\end{equation}
Now, by Proposition~\eqref{prop-vback}, as~$t\to+\infty$ we have~$\Phi_t(v)\to\Omega_\infty$ for all~$v\neq v_{\mathrm{back}}$, and~$\Phi_t(v_{\mathrm{back}})\to-\Omega_\infty$. Therefore by the dominated convergence theorem, the estimate~\eqref{estimateW1} gives that~$W_1(f(t,\cdot),f_\infty)\to0$ as~$t\to+\infty$. It remains to prove that~$m>\frac12$, which comes from Proposition~\ref{prop-omega-converges}, which gives that~$\frac{J_f}{|J_f|}\to\Omega_\infty$ as~$t\to+\infty$. Indeed, applying~\eqref{eqphisplitted} with~$\varphi(v)=v$, we get
\[J_f(t)=m\Phi_t(v_{\mathrm{back}})+\int_\mathbb{S}\mathbf{1}_{v\neq v_{\mathrm{back}}}\Phi_t(v)f_0(v)\,\d v,\] which gives by dominated convergence that, as~$t\to+\infty$, we have
\[J_f(t)\to-m\Omega_\infty+\int_\mathbb{S}\mathbf{1}_{v\neq v_{\mathrm{back}}}\Omega_\infty f_0(v)\,\d v=(1-2m)\Omega_\infty.\]
Since~$\frac{J_f(t)}{|J_f(t)|}\to\Omega_\infty$ as~$t\to+\infty$, we get~$1-2m>0$.
\qed
\end{proof}
\subsection{Symmetries and rates of convergence}
This subsection is dedicated to the study of rates of convergence, based on somewhat explicit solutions in the case where~$\Omega$ is constant in time, which is the case when the initial condition has some symmetries.

\begin{proposition} Let~$G$ be a group of orthogonal transformations under which~$f_0$ is invariant (that is to say~$f_0\circ g=f_0$ and all~$g\in G$) and such that the only fixed points on~$\mathbb{S}$ of every element of~$G$ are two opposite unit vectors that we call~$\pm e_n$. Then the solution~$f(t,\cdot)$ of the partial differential equation~\eqref{pde-intro} is also invariant under all elements of~$g$. Furthermore if~$J_{f_0}\neq0$, then~$J_{f}(t)=\alpha(t)e_n$ with~$\alpha$ positive (up to exchanging~$e_n$ and~$-e_n$), and~$\Omega(t)$ is constantly equal to~$e_n$.
\end{proposition}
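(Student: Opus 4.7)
My plan is to invoke the uniqueness of weak solutions to~\eqref{pde-intro} together with the orthogonal equivariance of the equation. First, I fix $g\in G$ and consider the pushforward measure $\tilde{f}(t,\cdot)=g\#f(t,\cdot)$, characterized by $\int_\mathbb{S}\varphi(v)\tilde f(t,v)\,\d v=\int_\mathbb{S}\varphi(gv)f(t,v)\,\d v$ for every $\varphi\in C(\mathbb{S})$. Since $g$ is an isometry of $\mathbb{S}$, one obtains immediately $J_{\tilde f}(t)=gJ_f(t)$, and the identity $\nabla_v(\varphi\circ g)(v)=g^{-1}(\nabla_v\varphi)(gv)$ (which uses the orthogonality of $g$ and the fact that the spherical gradient is covariant under isometries) shows, by plugging the test function $\varphi\circ g$ into the weak formulation~\eqref{pde-weak} for $f$ and changing variables, that $\tilde f$ is itself a weak solution of~\eqref{pde-intro}. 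The $G$-invariance of the initial datum reads $g\#f_0=f_0$, so the uniqueness statement in the well-posedness proposition forces $\tilde f(t,\cdot)=f(t,\cdot)$ for every $t\geqslant0$, i.e.\ $f(t,\cdot)$ is $G$-invariant.

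The invariance of $f(t,\cdot)$ translates at the level of the first moment into $J_f(t)=J_{\tilde f}(t)=gJ_f(t)$ for every $g\in G$. Thus $J_f(t)$ is a common fixed vector of $G$ in $\mathbb{R}^n$; any such nonzero vector, once normalized, yields a common fixed point on $\mathbb{S}$, which by hypothesis must be $\pm e_n$. Consequently $J_f(t)=\alpha(t)e_n$ for some scalar function $\alpha$.

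Finally, Lemma~\ref{lem-increasing} combined with $J_{f_0}\neq0$ gives $|\alpha(t)|=|J_f(t)|\geqslant|J_{f_0}|>0$ for all $t\geqslant0$, so $\alpha$ never vanishes. Since $t\mapsto J_f(t)$ is smooth, $\alpha$ is continuous and nowhere zero, hence of constant sign; after possibly exchanging $e_n$ with $-e_n$ we may assume $\alpha>0$, and then $\Omega(t)=J_f(t)/|J_f(t)|=e_n$ is constant in time.

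The only slightly delicate step is the verification that $\tilde f$ solves~\eqref{pde-intro}, but once one writes the change of variable under the isometry $g$ in~\eqref{pde-weak} and uses the covariance of $\nabla_v$, this is essentially a one-line manipulation; the remainder of the argument is an immediate consequence of uniqueness, of the rigidity assumption on common fixed points of $G$, and of the monotonicity of $|J_f|$ already established. I therefore do not anticipate any serious obstacle.
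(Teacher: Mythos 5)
Your argument is correct and follows essentially the same route as the paper: transport the solution by $g$, use orthogonal equivariance of the equation plus uniqueness of the weak solution to get $G$-invariance of $f(t,\cdot)$, deduce $gJ_f(t)=J_f(t)$ so that $\Omega(t)\in\{\pm e_n\}$ by the fixed-point hypothesis, and conclude via the monotonicity of $|J_f|$ and continuity. You merely make explicit the equivariance computation and the sign argument that the paper leaves implicit.
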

\begin{proof} The first part of the proposition comes from the fact that~$t\mapsto f(t,\cdot)\circ g$ is also a solution of~\eqref{pde-intro} (which is well-posed) with the same initial condition. Then, we have by invariance that~$gJ_{f(t,\cdot)}=\int_\mathbb{S}gvf_0(v)\,\d v=\int_\mathbb{S}gvf_0(gv)\,\d v=J_{f(t,\cdot)}$, for all~$g\in G$, and therefore~$\Omega(t)$ is a fixed point of every element of~$g$ and must be equal to~$\pm e_n$.\qed
\end{proof}
Let us mention two simple examples of these kind of symmetries : when~$f_0(v)$ only depends on~$v\cdot e_n$ ($G$ is then the set of isometries having~$e_n$ as fixed point), or when~$f(\sin{\theta}w+\cos{\theta}e_n)=f(-\sin{\theta}w+\cos{\theta}e_n)$ ($G$ is reduced to identity and to~$v\mapsto2e_n\cdot v-v$).

Let us now do some preliminary computations in the case where~$\Omega$ is constant in time. We work in an orthogonal base~$(e_1,\dots,e_n)$ of~$\mathbb{R}^n$ for which~$\Omega=e_n$ is the last vector, and we write write~$J_f(t)=\alpha(t)e_n$, with~$t\mapsto\alpha(t)$ positive and nondecreasing. We will use the stereographic projection
\begin{equation}\label{eq-def-s}s:\quad \begin{matrix}\mathbb{S}\setminus\{-e_n\}&\to&\mathbb{R}^{n-1}\\v&\mapsto&s(v)=\frac{1}{1+v\cdot e_n}P_{e_n^\perp}v,
\end{matrix}
\end{equation}
where we identify~$P_{e_n^\perp}v$ with its first~$n-1$ coordinates. This is a diffeomorphism between~$\mathbb{S}\setminus\{-e_n\}$ and~$\mathbb{R}^{n-1}$, and its inverse is given by
\begin{equation}\label{eq-def-p}p:\quad
  \begin{matrix}\mathbb{R}^{n-1}&\to&\mathbb{S}\setminus\{-e_n\}\subset\mathbb{R}^{n-1}\times\mathbb{R}\\z&\mapsto&p(z)=(\frac{2}{1+|z|^2}\,z,\frac{1-|z|^2}{1+|z|^2}).
  \end{matrix}
\end{equation}
If~$\varphi$ is an integrable function on~$\mathbb{S}$, the change of variable for this diffeomorphism reads
\begin{equation}\label{chg-variable}\int_\mathbb{S}\varphi(v)\,\d v=c_n^{-1}\int_{\mathbb{R}^{n-1}}\frac{\varphi(p(z))}{(1+|z|^2)^{n-1}}\,\d z,
\end{equation}
where the normalization constant is~$c_n=\int_{\mathbb{R}^{n-1}}\frac{\d z}{(1+|z|^2)^{n-1}}$.
If~$v$ is a solution to the differential equation~$\frac{\d v}{\d t}=\alpha(t)P_{v^\perp}e_n$ with~$v\neq-e_n$, a simple computation shows that~$z=s(v)$ satisfies the differential equation~$\frac{\d z}{\d t}=-\alpha(t)z$. Therefore, if we write~$\lambda(t)=\int_0^t\alpha(\tau)\,\d \tau$, we have an explicit expression for the solution~$f$ of the aggregation equation~\eqref{pde-linear}: the pushforward formula~\eqref{pushforward} is given, when~$f_0$ has no atom at~$-e_n$, by
\begin{equation}\label{pushforward-p}\forall\varphi\in C(\mathbb{S}), \int_\mathbb{S}\varphi(v)f(t,v)\,\d v=c_n^{-1}\int_{\mathbb{R}^{n-1}}\frac{\varphi(p(ze^{-\lambda(t)}))f_0(p(z))}{(1+|z|^2)^{n-1}}\, \d z.
\end{equation}
In particular, we have 
\begin{align}\nonumber1-\alpha(t)&=1-J_f(t)\cdot e_n=\int_\mathbb{S}(1-v\cdot e_n)f(t,v)\,\d v\\ \label{oneminusalpha}&=c_n^{-1}\int_{\mathbb{R}^{n-1}}\frac{2|z|^2e^{-2\lambda(t)}f_0(p(z))}{(1+|z|^2e^{-2\lambda(t)})(1+|z|^2)^{n-1}}\, \d z. \end{align}

We are now ready to state the first proposition regarding the rate of convergence towards~$\Omega_\infty$: in the framework of Theorem~\ref{thm-pde}, there is no hope to have a rate of convergence of~$f(t,\cdot)$ with respect to the~$W_1$ distance without further assumption on the regularity of~$f_0$, even if it has no atoms (in this case~$f(t,\cdot)\to\delta_{\Omega_\infty}$ as~$t\to+\infty$). More precisely the following proposition gives the construction of a solution decaying arbitrarily slowly to~$\delta_{\Omega_\infty}$, in contrast with results of local stability of Dirac masses for other models of alignment on the sphere~\cite{degond2014local}, for which as long as the initial condition is close enough to~$\delta_{\Omega_\infty}$, the solution converges exponentially fast in Wasserstein distance.

\begin{proposition} \label{prop-no-rate} Given a smooth decreasing function~$t\mapsto g(t)$ converging to~$0$ (slowly) as~$t\mapsto+\infty$, and such that~$g(0)<\frac12$, there exists a probability density function~$f_0$ such that the solution~$f(t,\cdot)$ of~\eqref{pde-intro} converges weakly to~$\delta_{\Omega_\infty}$, but such that~$W_1(f(t,\cdot),\delta_{\Omega_\infty})\geqslant g(t)$ for all~$t\geqslant0$.
\end{proposition}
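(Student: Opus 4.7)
The strategy is to exploit the symmetric regime of the preceding proposition so that $\Omega(t)$ is constantly equal to $e_n$ and the dynamics admits the explicit stereographic representation~\eqref{pushforward-p}, and then to engineer slow decay by placing a suitably heavy tail of initial mass near $-e_n$. I take $f_0$ to be an axisymmetric probability density on $\mathbb{S}$ (depending only on $v\cdot e_n$) with $J_{f_0}\cdot e_n>0$; the preceding proposition then gives $\Omega(t)\equiv e_n$, and since $f_0$ is atomless Theorem~\ref{thm-pde} forces $m=0$, so $f(t,\cdot)\to\delta_{e_n}$ weakly and we may take $\Omega_\infty=e_n$.

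The $1$-Lipschitz test function $\varphi(v)=1-v\cdot e_n$, which vanishes at $e_n$, yields $W_1(f(t,\cdot),\delta_{e_n})\geqslant 1-\alpha(t)$ where $\alpha(t)=J_f(t)\cdot e_n$. Using the explicit formula~\eqref{oneminusalpha} and the observation that $\frac{2|z|^2 e^{-2\lambda(t)}}{1+|z|^2 e^{-2\lambda(t)}}\geqslant 1$ whenever $|z|\geqslant e^{\lambda(t)}$, I get
\[1-\alpha(t)\;\geqslant\;\mu_0\bigl(\{|z|\geqslant e^{\lambda(t)}\}\bigr)\;\geqslant\;\mu_0\bigl(\{|z|\geqslant e^t\}\bigr),\]
where $\d\mu_0(z)=c_n^{-1}(1+|z|^2)^{-(n-1)}f_0(p(z))\,\d z$ is the stereographic image of $f_0$; the second inequality uses $\alpha\leqslant 1$, hence $\lambda(t)\leqslant t$, together with the monotonicity of $s\mapsto\mu_0(\{|z|\geqslant e^s\})$. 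Thus it is enough to construct $f_0$ so that $\mu_0(\{|z|\geqslant e^t\})\geqslant g(t)$ for every $t\geqslant 0$.

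I realize this by prescribing the tail $\mu_0(\{|z|\geqslant r\})=g(\ln r)$ for $r\geqslant 1$; the corresponding radial density on $\mathbb{R}^{n-1}$, obtained by differentiation, is nonnegative because $g$ is decreasing, and it carries total mass $g(0)<\tfrac12$. The remaining mass $1-g(0)>\tfrac12$ is distributed as any smooth radial density supported on a small neighborhood $\{|z|\leqslant\varepsilon\}$ of the origin (that is, in a small cap around $e_n$), and the density $f_0$ on $\mathbb{S}$ is recovered from $\mu_0$ via the Jacobian in~\eqref{chg-variable}. Using $v\cdot e_n=\frac{1-|z|^2}{1+|z|^2}$, the inner part contributes at least $(1-g(0))\frac{1-\varepsilon^2}{1+\varepsilon^2}$ to $J_{f_0}\cdot e_n$ while the outer part contributes at worst $-g(0)$; taking $\varepsilon$ small enough thus yields $J_{f_0}\cdot e_n\geqslant 1-2g(0)-O(\varepsilon^2)>0$. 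The main technical point—where $g(0)<\tfrac12$ enters sharply—is precisely this compatibility between the required tail mass near $-e_n$ and the necessity of keeping $J_{f_0}$ aligned with $e_n$; everything else is a routine verification using the change of variables set up earlier in the subsection.
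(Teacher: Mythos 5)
Your proposal is correct and follows essentially the same route as the paper. You use the symmetry framework so that $\Omega(t)\equiv e_n$ and invoke the stereographic representation~\eqref{pushforward-p}, lower-bound $W_1$ by $1-\alpha(t)$ via the $1$-Lipschitz test function $1-v\cdot e_n$, observe that $\frac{2|z|^2e^{-2\lambda(t)}}{1+|z|^2e^{-2\lambda(t)}}\geqslant 1$ for $|z|\geqslant e^{\lambda(t)}$ together with $\lambda(t)\leqslant t$, and then design $f_0$ by prescribing the tail $\mu_0(\{|z|\geqslant r\})=g(\ln r)$ for $r\geqslant1$ with the remaining mass $1-g(0)$ concentrated near $e_n$. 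This is exactly the construction in the paper; the paper simply writes out the radial density $h(r)$ explicitly (whose outer part is precisely $-g'(\ln r)/r$, so that the associated tail is $g(\ln r)$, and whose inner part is the uniform profile on $(0,\varepsilon)$), and bounds $1-\alpha(0)$ from above rather than $\alpha(0)$ from below. Your more abstract presentation, specifying the pushforward measure $\mu_0$ directly and noting that differentiability and monotonicity of $g$ give a nonnegative tail density of total mass $g(0)$, is cleaner but mathematically identical. The use of $g(0)<\frac12$ to guarantee $J_{f_0}\cdot e_n>0$ for small $\varepsilon$ is the same in both arguments.
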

\begin{proof}
  We will construct~$f_0$ as a function of the form~$f_0(v)=h(|s(v)|)$, where the stereographic projection~$s$ is defined in~\eqref{eq-def-s}.
  Let us prove that the following choice of~$h$ works, for~$\varepsilon>0$ sufficiently small :
  \[h(r)=b_n\tfrac{(1+r^2)^{n-1}}{r^{n-2}}\Big[\tfrac{1-g(0)}{\varepsilon}\,\mathbf{1}_{0<r<\varepsilon}-\tfrac{g'(\ln r)}{r}\,\mathbf{1}_{r\geqslant1}\Big],\]
  where the normalization constant is~$b_n=\int_{\mathbb{R}_+}\frac{r^{n-2}\,\d r}{(1+r^2)^{n-1}}$. First of all,~$f_0$ is a probability density, since we have, thanks to~\eqref{chg-variable}
  \[\begin{split}\int_\mathbb{S}f_0(v) \d v&=\frac{\int_{\mathbb{R}^{n-1}}\frac{h(|z|)\,\d z}{(1+|z|^2)^{n-1}}}{\int_{\mathbb{R}^{n-1}}\frac{\d z}{(1+|z|^2)^{n-1}}}=\frac{\int_0^{+\infty}\frac{h(r)r^{n-2}\,\d r}{(1+r^2)^{n-1}}}{\int_0^{+\infty}\frac{r^{n-2}\,\d r}{(1+r^2)^{n-1}}}=b_n^{-1}\int_0^{+\infty}\frac{h(r)r^{n-2}\,\d r}{(1+r^2)^{n-1}}\\
      &=\int_0^\varepsilon\tfrac{1-g(0)}{\varepsilon}\,\d r-\int_1^{+\infty}\tfrac{g'(\ln r)}{r}\,\d r=1-g(0)-[g(\ln r)]_1^{+\infty}=1.
    \end{split}\]
  By symmetry, we have that~$J_f(t)=\alpha(t)e_n$. Let us check that~$\alpha(0)>0$. We do as in formula~\eqref{oneminusalpha} :
  \[1-\alpha(t)=b_n^{-1}\int_0^{+\infty}\frac{2r^2e^{-2\lambda(t)}h(r)r^{n-2}\,\d r}{(1+r^2e^{-2\lambda(t)})(1+r^2)^{n-1}}.\]
  We therefore get
  \[\begin{split}1-\alpha(0)&=\int_0^{\varepsilon}\frac{2(1-g(0))r^2 \d r}{(1+r^2)\varepsilon}-\int_1^{+\infty}g'(\ln r)\frac{2r}{1+r^2}\d r\\
      &\leqslant\frac{2\varepsilon^2}3(1-g(0))-2\int_1^\infty g'(\ln r)\frac{\d r}{r}=2g(0)+\frac{2\varepsilon^2}3(1-g(0)),
    \end{split}\]
  which is strictly less than~$1$ as long as~$g(0)<\frac12$ and~$\varepsilon$ is sufficiently small. Therefore in this case we have~$\alpha(0)>0$. This means that~$\Omega(t)=e_n=\Omega_\infty$ for all time~$t$, and thanks to Theorem~\eqref{thm-pde}, since~$f_0$ has no atoms, the solution~$f(t,\cdot)$ converges weakly to~$\delta_{\Omega_\infty}$ as~$t\to+\infty$.

  Let us also remark that~$W_1(f(t,\cdot),\delta_{e_n})=\int_{\mathbb{S}}|v-e_n|f(t,v)\d v$ (see the the proof of the forthcoming Proposition~\ref{prop-rates-regular}), and since we have~$1-v\cdot e_n\leqslant|v-e_n|$, we obtain~$1-\alpha(t)\leqslant W_1(f(t,\cdot),\delta_{e_n})$. Therefore, to prove that the convergence of~$f$ towards~$\delta_{\Omega_\infty}$ is as slow as~$g(t)$, it only remains to prove that~$1-\alpha(t)\geqslant g(t)$. We have~$\lambda(t)\leqslant t$, and so when~$r\geqslant e^{t}$, we get~$re^{-\lambda(t)}\leqslant1$. Since~$x\mapsto\frac{2x}{1+x}$ is increasing, we get~$\frac{2r^2e^{-2\lambda(t)}}{1+r^2e^{-2\lambda(t)}}\geqslant1$. We therefore get
  \[1-\alpha(t)\geqslant-\int_{e^t}^{+\infty}g'(\ln r)\frac{2re^{-2\lambda(t)}}{(1+r^2e^{-2\lambda(t)})}\d r\geqslant-\int_{e^t}^{+\infty}\frac{g'(\ln r)\d r}{r}=g(t),\]
which ends the proof.\qed
\end{proof}

We conclude this subsection by more precise estimates of the rate of convergence in various Wasserstein distances when~$\Omega$ is constant in time and when the initial condition has a density with respect to the Lebesgue measure which is bounded above and below. We write~$a(t)\asymp b(t)$ whenever there exists two positive constants~$c_1,c_2$ such that~$c_1b(t)\leqslant a(t)\leqslant c_2b(t)$ for all~$t\geqslant0$.
We recall the definition of the Wasserstein distance~$W_2$, for two probability measures~$\mu$ and~$\nu$ on~$\mathbb{S}$ :
\[W_2^2(\mu,\nu)=\inf_{\pi}\int_{\mathbb{S}\times\mathbb{S}}|v-w|^2\d\pi(v,w),\]
where the infimum is taken over the probability measures~$\pi$ on~$\mathbb{S}\times\mathbb{S}$ with first and second marginals respectively equal to~$\mu$ and~$\nu$. 
\begin{proposition} \label{prop-rates-regular} Suppose that~$f_0$ has a density with respect to the Lebesgue measure satisfying~$m\leqslant f_0(v)\leqslant M$ for all~$v$ (for some~$0<m<M$), with~$J_{f_0}\neq0$ and such that~$\Omega(t)=e_n$ is constant in time. Then we have
  \begin{align*}
    W_1(f(t,\cdot),\delta_{e_n})&\asymp\begin{cases}(1+t)e^{-t}&\text{ if }n=2,\\e^{-t}&\text{ if }n\geqslant3,
    \end{cases}\\
        W_2(f(t,\cdot),\delta_{e_n})&\asymp\begin{cases}e^{-\frac12t}&\text{ if }n=2,\\\sqrt{1+t}\,e^{-t}&\text{ if }n=3,\\e^{-t}&\text{ if }n\geqslant4.
    \end{cases}
  \end{align*}
\end{proposition}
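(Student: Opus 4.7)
The plan is to reduce both Wasserstein distances to one-variable radial integrals via the stereographic projection, and then perform a dimension-dependent asymptotic analysis. The first observation is that since the target $\delta_{e_n}$ is a Dirac mass, the only admissible transport plan has second marginal $\delta_{e_n}$, so $W_2^2(f(t,\cdot),\delta_{e_n})=\int_\mathbb{S}|v-e_n|^2 f(t,v)\,\d v$. By Kantorovich--Rubinstein duality, taking the $1$-Lipschitz test function $v\mapsto|v-e_n|$ (which is optimal because $\varphi(v)-\varphi(e_n)\leqslant|v-e_n|$ for any $1$-Lipschitz $\varphi$), one obtains likewise $W_1(f(t,\cdot),\delta_{e_n})=\int_\mathbb{S}|v-e_n|\,f(t,v)\,\d v$. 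The identity $|v-e_n|^2=2(1-v\cdot e_n)$ immediately yields $W_2^2(f(t,\cdot),\delta_{e_n})=2(1-\alpha(t))$, so that formula~\eqref{oneminusalpha} already provides the $W_2^2$ integral in stereographic coordinates.

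Using the relation $|p(z)-e_n|=2|z|/\sqrt{1+|z|^2}$ together with the pushforward formula~\eqref{pushforward-p}, and invoking the hypothesis $m\leqslant f_0\leqslant M$, I would reduce the two estimates to the comparisons $W_1(f(t,\cdot),\delta_{e_n})\asymp e^{-\lambda(t)}\int_0^{+\infty}\frac{r^{n-1}\,\d r}{\sqrt{1+r^2e^{-2\lambda(t)}}\,(1+r^2)^{n-1}}$ and $W_2^2(f(t,\cdot),\delta_{e_n})\asymp e^{-2\lambda(t)}\int_0^{+\infty}\frac{r^n\,\d r}{(1+r^2e^{-2\lambda(t)})\,(1+r^2)^{n-1}}$, where the $r^{n-2}$ polar-coordinate factor has been absorbed. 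Both $\asymp$ rely crucially on having both sides of the bound on $f_0$.

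Before evaluating these integrals, I would verify that $\lambda(t)=t+O(1)$, so that $e^{-\lambda(t)}\asymp e^{-t}$. The trivial bound $\alpha\leqslant 1$ gives $\lambda(t)\leqslant t$. For the reverse direction, the upper-bound side of the $W_2^2$ comparison (which uses only $f_0\leqslant M$) will yield $1-\alpha(t)\leqslant C_n\,e^{-\lambda(t)}$ in every dimension via the splitting described below. Since $\alpha$ is nondecreasing with $\alpha(0)>0$ (here the hypothesis $J_{f_0}\neq 0$ is used), one has $\lambda(s)\geqslant\alpha(0)s$ and hence $\int_0^\infty e^{-\lambda(s)}\,\d s<\infty$; therefore $t-\lambda(t)=\int_0^t(1-\alpha(s))\,\d s$ stays bounded.

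The final step is to evaluate the two radial integrals by splitting at the critical radius $r=e^{\lambda(t)}$ where $r^2 e^{-2\lambda}\sim 1$. On the inner region $(r<e^{\lambda})$ the factor $\sqrt{1+r^2e^{-2\lambda}}$ (respectively $1+r^2e^{-2\lambda}$) is comparable to $1$; on the outer region $(r>e^{\lambda})$ it is comparable to $re^{-\lambda}$ (respectively $r^2 e^{-2\lambda}$). Inside each region $(1+r^2)^{n-1}$ is comparable to $1$ for $r\leqslant 1$ and to $r^{2(n-1)}$ for $r\geqslant 1$, reducing the integrand to an elementary power of $r$. That power equals $r^{-1}$, and produces a logarithmic contribution $\asymp\lambda(t)$, precisely when $n=2$ in the $W_1$ integral and when $n=3$ in the $W_2^2$ integral: this is the source of the extra factors $(1+t)$ and $\sqrt{1+t}$ in the statement. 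In all other cases only pure exponentials appear, with the outer region contributing the strictly smaller order $e^{-(n-1)\lambda}$. Substituting $\lambda\asymp t$ gives the announced rates. The main obstacle I anticipate is keeping the bounds genuinely two-sided ($\asymp$, not merely $\leqslant$) in the critical dimensions $n=2$ for $W_1$ and $n=3$ for $W_2$: there the inner bulk and the transition zone $r\sim e^{\lambda}$ contribute at the same order, and a rougher argument would miss the logarithmic prefactor in the lower bound, whose genuine presence requires the bound $m\leqslant f_0$ and a careful handling of the intermediate range $1\leqslant r\leqslant e^{\lambda(t)}$.
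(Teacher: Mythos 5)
Your proposal is correct and follows essentially the same route as the paper: the explicit identities $W_1(f(t,\cdot),\delta_{e_n})=\int_\mathbb{S}|v-e_n|f(t,v)\,\d v$ and $W_2^2(f(t,\cdot),\delta_{e_n})=2(1-\alpha(t))$, the stereographic reduction to the same two radial integrals via $m\leqslant f_0\leqslant M$, and the final step $t-\lambda(t)=\int_0^t(1-\alpha(s))\,\d s=O(1)$ using $\lambda(t)\geqslant\alpha(0)t$. The only difference is cosmetic: the paper evaluates the critical-dimension integrals ($n=2$ for $W_1$, $n=2,3$ for $W_2$) by explicit antiderivatives, whereas you estimate them by splitting at $r\sim 1$ and $r\sim e^{\lambda(t)}$, which yields the same two-sided asymptotics including the logarithmic factors.
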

\begin{proof}
  Let us first give explicit formulas for~$W_1(f(t,\cdot),\delta_{e_n})$ and~$W_2(f(t,\cdot),\delta_{e_n})$. If~$\varphi\in\mathrm{Lip}_1(\mathbb{S})$, we have
  \[\left|\int_\mathbb{S}\varphi(v)f(t,v)\,\d v-\varphi(e_n)\right|\leqslant\int_\mathbb{S}|\varphi(v)-\varphi(e_n)|f(t,v)\,\d v\leqslant\int_\mathbb{S}|v-e_n|f(t,v)\,\d v.\]
  Therefore, by taking the supremum, we get~$W_1(f(t,\cdot),\delta_{e_n})\leqslant\int_\mathbb{S}|v-e_n|f(t,v)\,\d v$. Furthermore, by taking~$\varphi(v)=|v-e_n|$, we get that this inequality is an equality. The explicit expression of~$W_2(f(t,\cdot),\delta_{e_n})$ comes from the fact that the only probability measure on~$\mathbb{S}\times\mathbb{S}$ with marginals~$f(t,\cdot)$ and~$\delta_{e_n}$ is the product measure~$\mu\otimes\delta_{v_0}$, and therefore we have~$W_2^2(f(t,\cdot),\delta_{e_n})=\int_\mathbb{S}|v-e_n|^2f(t,v)\,\d v$. Using the fact that~$|v-e_n|^2=2-2v\cdot e_n$ and the definition~\eqref{eq-def-p} of~$p$, we get~$|p(z)-e_n|=\frac{2|z|}{\sqrt{1+|z|^2}}$. Finally, using~\eqref{pushforward-p}, we obtain
  \begin{equation}\label{W1explicit}W_1(f(t,\cdot),\delta_{e_n})= c_n^{-1}\int_{\mathbb{R}^{n-1}}\frac{2|z|e^{-\lambda(t)}f_0(p(z))}{\sqrt{1+|z|^2e^{-2\lambda(t)}}(1+|z|^2)^{n-1}}\, \d z,
  \end{equation}
  and, as in~\eqref{oneminusalpha}:
  \begin{equation}\label{W2explicit}W_2^2(f(t,\cdot),\delta_{e_n})=2(1-\alpha(t))= c_n^{-1}\int_{\mathbb{R}^{n-1}}\frac{4|z|^2e^{-2\lambda(t)}f_0(p(z))\, \d z}{(1+|z|^2e^{-2\lambda(t)})(1+|z|^2)^{n-1}}.
  \end{equation}
  Thanks to the assumptions on~$f_0$, from~\eqref{W1explicit} we immediately get
  \[W_1(f(t,\cdot),\delta_{e_n})\asymp\int_0^{+\infty}\frac{r^{n-1}e^{-\lambda(t)}\,\d r}{\sqrt{1+r^2e^{-2\lambda(t)}}(1+r^2)^{n-1}},\]
  and for~$n\geqslant3$, since~$\lambda(t)\geqslant0$, we get
  \[\begin{split}0<\int_0^{+\infty}\frac{r^{n-1}\,\d r}{\sqrt{1+r^2}(1+r^2)^{n-1}}&\leqslant\int_0^{+\infty}\frac{r^{n-1}\,\d r}{\sqrt{1+r^2e^{-2\lambda(t)}}(1+r^2)^{n-1}}\\ &\leqslant\int_0^{+\infty}\frac{r^{n-1}\,\d r}{(1+r^2)^{n-1}}<+\infty,
    \end{split}
  \]
  which gives~$W_1(f(t,\cdot),\delta_{e_n})\asymp e^{-\lambda(t)}$. For~$n=2$, we have
  \[\begin{split}\int_0^{+\infty}\frac{re^{-\lambda(t)}\,\d r}{\sqrt{1+r^2e^{-2\lambda(t)}}(1+r^2)}&=\left[\tfrac{e^{-\lambda(t)}}{2\sqrt{1-e^{-2\lambda(t)}}}\ln\Big(\tfrac{\sqrt{1+r^2e^{-2\lambda(t)}}-\sqrt{1-e^{-2\lambda(t)}}}{\sqrt{1+r^2e^{-2\lambda(t)}}+\sqrt{1-e^{-2\lambda(t)}}}\Big)\right]_0^{+\infty}\\
      &=\frac{e^{-\lambda(t)}}{2\sqrt{1-e^{-2\lambda(t)}}}\ln\Big(\frac{1+\sqrt{1-e^{-2\lambda(t)}}}{1-\sqrt{1-e^{-2\lambda(t)}}}\Big).
    \end{split}\]
  Since this last expression is equivalent to~$\lambda(t)e^{-\lambda(t)}$ as~$\lambda(t)\to+\infty$ and converges to~$1$ as~$\lambda(t)\to0$, we then get~$W_1(f(t,\cdot),\delta_{e_n})\asymp (1+\lambda(t))e^{-\lambda(t)}$.

  We proceed similarly for the distance~$W_2$. From the assumptions on~$f_0$ and~\eqref{W2explicit} we get
  \[W_2^2(f(t,\cdot),\delta_{e_n})\asymp1-\alpha(t)\asymp\int_0^{+\infty}\frac{r^{n}e^{-2\lambda(t)}\,\d r}{(1+r^2e^{-2\lambda(t)})(1+r^2)^{n-1}}.\]
  By the same argument of integrability, when~$n\geqslant4$, since~$\int_0^{+\infty}\frac{r^n\, \d r}{(1+r^2)^{n-1}}<+\infty$, we obtain~$1-\alpha(t)\asymp e^{-2\lambda(t)}$. For~$n=2$ we have
  \[\begin{split}\int_0^{+\infty}\frac{r^{2}e^{-2\lambda(t)}\,\d r}{(1+r^2e^{-2\lambda(t)})(1+r^2)}&=\left[\tfrac{e^{-\lambda(t)}\tan^{-1}(e^{-\lambda(t)}r)-e^{-2\lambda(t)}\tan^{-1}(r)}{1-e^{-2\lambda(t)}}\right]_0^{+\infty}\\ &=\frac{\pi\, e^{-\lambda(t)}}{2(1+e^{-\lambda(t)})},
    \end{split}
  \]
  which gives~$1-\alpha(t)\asymp e^{-\lambda(t)}$. For~$n=3$ we have
  \[\begin{split}\int_0^{+\infty}\frac{r^{2}e^{-2\lambda(t)}\,\d r}{(1+r^2e^{-2\lambda(t)})(1+r^2)^2}&=\tfrac{e^ {-2\lambda(t)}}{2(1-e^{-2\lambda(t)})^2}\left[\ln\Big(\tfrac{1+r^2}{1+r^2e^{-2\lambda(t)}}\Big)+\tfrac{1-e^{-2\lambda(t)}}{1+r^2}\right]_0^{+\infty}\\ &=\frac{e^ {-2\lambda(t)}}{2(1-e^{-2\lambda(t)})^2}(2\lambda(t)-1+e^{-2\lambda(t)}).
    \end{split}
  \]
  Since this last expression is equivalent to~$\lambda(t)e^{-2\lambda(t)}$ as~$\lambda(t)\to+\infty$ and converges to~$\frac14$ as~$\lambda(t)\to0$, we then get~$1-\alpha(t)\asymp (1+\lambda(t))e^{-2\lambda(t)}$.

 In all dimensions, we have, since~$\lambda(t)=\int_0^t\alpha(\tau)\d \tau\geqslant\alpha(0)t$, that there exists~$C>0$ such that~$1-\alpha(t)\geqslant Ce^{-\alpha(0)t}$. Therefore, integrating in time, we obtain~$t-\lambda(t)\geqslant\widetilde{C}e^{-\alpha(0)t}$. This gives, since~$\lambda(t)\leqslant t$, that~$e^{-\lambda(t)}\sim e^{-t}$ and~$1+\lambda(t)\asymp 1+t$. Combining this with all the estimates we obtain so far (and reminding that~$W_2(f(t,\cdot),\delta_{e_n})\asymp\sqrt{1-\alpha(t)}$ ends the proof.
  \qed
\end{proof}

Interestingly, the estimates given by Proposition~\ref{prop-rates-regular} depend on the dimension and on the chosen distance. We expect that these estimates still hold when~$\Omega$ depends on time, and, as in the result of Theorem~\ref{thm-ode}, we expect to have an even better rate of convergence of~$\Omega$ towards~$\Omega_\infty$.

\section{The particle model}\label{section-ode}
The object of this section is to prove Theorem~\eqref{thm-ode}, and we divide it into several propositions. We take~$N$ positive real numbers~$(m_i)_{1\leqslant i\leqslant N}$ with~$\sum_{i=1}^Nm_i=1$, and~$N$ unit vectors~$v_i^0\in\mathbb{S}$ (for~$1\leqslant i\leqslant N$) such that~$v_i^0\neq v_j^0$ for all~$i\neq j$. We denote by~$(v_i)_{1\leqslant i\leqslant N}$ the solution of the system of differential equation~\eqref{ode-with-mi} :
\[\frac{\d v_i}{\d t}=P_{v_i^\perp}J, \text{ with } J(t)=\sum_{i=1}^Nm_iv_i(t),\]
with the initial conditions~$v_i(0)=v_i^0$ for~$1\leqslant i\leqslant N$.

\begin{proposition}\label{unique-back-II}
  If~$J(0)\neq0$, then~$|J|$ is nondecreasing, so~$\Omega(t)=\frac{J(t)}{|J(t)|}\in\mathbb{S}$ is well-defined for all times~$t\geqslant0$. We have one of the two following possibilities :
  \begin{itemize}
    \item For all~$1\leqslant i\leqslant N$,~$v_i(t)\cdot\Omega(t)\to1$ as~$t\to+\infty$.
    \item There exists~$i_0$ such that~$v_i(t)\cdot\Omega(t)\to-1$ as~$t\to+\infty$, and for all~$i\neq i_0$, we have~$v_i(t)\cdot\Omega(t)\to1$ as~$t\to+\infty$.
    \end{itemize}
    Furthermore, if we denote by~$\lambda>0$ the limit of~$|J(t)|$ as~$t\to+\infty$, we have for all~$i,j$ in the first possibility (resp. for all~$i\neq i_0$,$j\neq i_0$ in the second possibility),~$\|v_i(t)-v_j(t)\|=O(e^{-(\lambda-\varepsilon)t})$ (for any~$\varepsilon>0$).
  \end{proposition}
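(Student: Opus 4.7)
The plan is to reduce everything to the already-established PDE machinery through Remark~\ref{remark-ode-pde}, which states that the empirical measure $f(t,\cdot)=\sum_{i=1}^N m_i\delta_{v_i(t)}$ is a weak solution of~\eqref{pde-intro} with $J_f=J$. This reduction will handle the monotonicity of $|J|$, the existence of $\Omega_\infty$, and the dichotomy on the individual limits; a short ODE computation will then yield the exponential rate.

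For the first two conclusions, I will verify the hypotheses of Propositions~\ref{prop-omega-converges} and~\ref{prop-vback}. Lemma~\ref{lem-increasing} applied to the empirical $f$ gives directly that $|J|$ is nondecreasing; alternatively, a one-line computation using $\dot v_i=P_{v_i^\perp}J$ yields $\tfrac12\tfrac{\d|J|^2}{\d t}=\sum_i m_i(|J|^2-(v_i\cdot J)^2)\geq0$. Since $|J|\leq1$, the map $|J|$ converges to some $\lambda\in[|J(0)|,1]$, and in particular $\lambda>0$ under the hypothesis $J(0)\neq0$. Proposition~\ref{prop-omega-converges} then provides $\Omega_\infty\in\mathbb{S}$ with $\Omega(t)\to\Omega_\infty$.

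Next, I will apply Proposition~\ref{prop-vback} with $\mathcal{J}=J$: continuity, positivity, boundedness and monotonicity of $|\mathcal{J}|$, and convergence of $\Omega$, have all just been established. The proposition produces a unique $v_{\mathrm{back}}\in\mathbb{S}$ such that the flow of $\dot v=P_{v^\perp}J$ starting from $v_{\mathrm{back}}$ converges to $-\Omega_\infty$, while any other starting point yields a solution converging to $\Omega_\infty$. Since each $v_i$ solves this very ODE with initial condition $v_i^0$, it converges to $-\Omega_\infty$ if and only if $v_i^0=v_{\mathrm{back}}$. The assumption $v_i^0\neq v_j^0$ for $i\neq j$ ensures that at most one index $i_0$ satisfies this equality, yielding precisely the two alternatives in the statement, and taking the dot product of either limit with $\Omega(t)\to\Omega_\infty$ gives the stated convergences $v_i\cdot\Omega\to\pm1$.

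Finally, for the exponential estimate, I will reuse formula~\eqref{odevvtilde} from the proof of Proposition~\ref{prop-vback}: for any two indices $i,j$,
\[\frac{\d}{\d t}\|v_i-v_j\|^2=\gamma_{ij}(t)\|v_i-v_j\|^2,\qquad \gamma_{ij}(t)=-J(t)\cdot(v_i(t)+v_j(t)).\]
When both $v_i$ and $v_j$ converge to $\Omega_\infty$, using $J(t)\to\lambda\Omega_\infty$ we get $\gamma_{ij}(t)\to-2\lambda$. For any $\varepsilon>0$, pick $T$ beyond which $\gamma_{ij}(t)\leq-2(\lambda-\varepsilon)$; integrating the resulting logarithmic derivative then yields $\|v_i(t)-v_j(t)\|=O(e^{-(\lambda-\varepsilon)t})$, as required. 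No substantial obstacle is expected here: all the heavy lifting has been done in Section~\ref{section-pde}, and the only genuinely new ingredient is the observation that pairwise distinctness of the $v_i^0$ forces uniqueness of the possible ``backwards'' index.
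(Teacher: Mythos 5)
Your proposal is correct, but it follows a genuinely different route from the paper. You reduce the proposition to the continuum results of Section~\ref{section-pde}: via Remark~\ref{remark-ode-pde} the empirical measure $\sum_i m_i\delta_{v_i(t)}$ is identified with the unique weak solution of~\eqref{pde-intro}, after which Lemma~\ref{lem-increasing} gives the monotonicity of $|J|$, Proposition~\ref{prop-omega-converges} gives $\Omega\to\Omega_\infty$, and Proposition~\ref{prop-vback} gives that each $v_i$ converges to $\Omega_\infty$ unless $v_i^0=v_{\mathrm{back}}$, so pairwise distinctness of the $v_i^0$ forces at most one backward index; the rate then follows from the repulsion/contraction identity~\eqref{odevvtilde}, exactly as in the paper's own computation~\eqref{dvivj}. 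The paper instead keeps Section~\ref{section-ode} self-contained: it derives the energy identity~\eqref{dJ2dt} from the gradient-flow structure $\mathcal{E}=1-|J|^2$, deduces that $\sum_i m_i\bigl(1-(v_i\cdot\Omega)^2\bigr)$ is integrable with bounded derivative and hence tends to $0$, and rules out two backward particles by applying the repulsion argument directly to a pair $v_i,v_j$ with $v_i\cdot\Omega,\,v_j\cdot\Omega\to-1$; notably it never needs convergence of $\Omega$ at this stage, since the dichotomy is phrased relative to the moving direction $\Omega(t)$. Your route buys stronger intermediate conclusions (convergence of $\Omega$ and genuine convergence $v_i\to\pm\Omega_\infty$) at no extra computational cost, but at the price of importing the hardest part of the PDE analysis (Proposition~\ref{prop-omega-converges} via Lemma~\ref{lem-L1}) and of resting on Remark~\ref{remark-ode-pde}, which the paper states without proof, so strictly speaking you should verify that the empirical measure is a weak solution and invoke uniqueness of the weak solution to identify it with $f$; the paper's elementary argument avoids both dependencies, which is precisely why Section~\ref{section-ode} can be read independently. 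Two cosmetic points: justify the logarithmic-derivative step when $v_i(T)=v_j(T)$ (or use Gr\"onwall on $\|v_i-v_j\|^2$ directly), and note that $\gamma_{ij}\to-2\lambda$ uses $J\to\lambda\Omega_\infty$, which combines $|J|\to\lambda$ with the convergence of $\Omega$ you established earlier.
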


  \begin{proof}
    Let us see the differential system as a kind of gradient flow of the following interaction energy (this is reminiscent of the gradient flow structure of the kinetic equation~\eqref{pde-intro}, see Remark~\ref{remark-gradient-flow}):
    \[\mathcal{E}=\frac12\sum_{i,j=1}^Nm_im_j\|v_i-v_j\|^2=\sum_{i,j=1}^Nm_im_j(1-v_i\cdot v_j)=1-|J|^2\geqslant0\]
    Indeed, we then get~$\nabla_{v_i}\mathcal{E}=-2\sum_{j=1}^Nm_im_jP_{v_i^\perp}v_j=-2m_iP_{v_i^\perp}J$ (using the formula~$\nabla_v(u\cdot v)=P_{v^\perp}u$).
    We therefore have~$\frac{\d v_i}{\d t}=-\frac1{2m_i}\nabla_{v_i}\mathcal{E}$, and we obtain
    \begin{equation}\label{dJ2dt}\frac{\d |J|^2}{\d t}=-\frac{\d \mathcal{E}}{\d t}=-\sum_{i=1}^N\nabla_{v_i}\mathcal{E}\cdot\frac{\d v_i}{\d t}=2\sum_{i=1}^Nm_i\left|\frac{\d v_i}{\d t}\right|^2\geqslant0.
  \end{equation}
  This gives that~$|J|$ is nondecreasing in time. So we can define~$\Omega(t)=\frac{J(t)}{|J(t)|}$ and
  rewrite~\eqref{dJ2dt} as
  \begin{equation}\label{dJ2dtbis}\frac{\d|J|^2}{\d t}=2\sum_{i=1}^Nm_i|P_{v_i^\perp}J|^2=2|J|^2\sum_{i=1}^Nm_i(1-(v_i\cdot\Omega)^2).
  \end{equation}
  We can compute the time derivative of this quantity and observe that all terms are uniformly bounded in time. Therefore, since it is an integrable function of time (since~$|J|^2\leqslant1$) with bounded derivative, it must converge to~$0$ as~$t\to+\infty$. Therefore we obtain that~$(v_i(t)\cdot\Omega(t))^2\to1$ for all~$1\leqslant i\leqslant N$.
  Let us now take~$1\leqslant i,j\leqslant N$ and estimate~$\|v_i-v_j\|$. We have
  \begin{align}\nonumber\frac12\frac{\d}{\d t}\|v_i-v_j\|^2&=-\frac{\d}{\d t}(v_i\cdot v_j)=-|J|(v_j\cdot P_{v_i^\perp}\Omega+v_i\cdot P_{v_j^\perp}\Omega)\\ \nonumber &=-|J|\,(\Omega\cdot v_i+\Omega\cdot v_j)(1-v_i\cdot v_j)\\ &=-|J|\,\Omega\cdot\frac{v_i+v_j}2\|v_i-v_j\|^2.\label{dvivj}
  \end{align}
  Therefore if~$v_i\cdot\Omega\to1$ and~$v_j\cdot\Omega\to1$, we get~$\frac12\frac{\d}{\d t}\|v_i-v_j\|^2\leqslant-(\lambda-\varepsilon)\|v_i-v_j\|^2$ for~$t$ sufficiently large, and therefore we obtain~$\|v_i-v_j\|^2=O(e^{-2(\lambda-\varepsilon)t})$.

  Finally if~$v_i\cdot\Omega\to-1$ and~$v_j\cdot\Omega\to-1$, for~$t$ sufficiently large (say~$t\geqslant t_0$) we obtain~$\frac12\frac{\d}{\d t}\|v_i-v_j\|^2\geqslant(\lambda-\varepsilon)\|v_i-v_j\|^2$. This is the same phenomenon of repulsion as~\eqref{odevvtilde} in the previous part, and the only bounded solution to this differential inequality is when~$v_i(t_0)=v_j(t_0)$, which means, by uniqueness that~$v_i^0=v_j^0$ and therefore~$i=j$. This means that if there is an index~$i_0$ such that~$v_{i_0}(t)\cdot\Omega(t)\to-1$, then for all~$i\neq i_0$, we have~$v_{i}(t)\cdot\Omega(t)\to1$ as~$t\to\infty$, and this ends the proof.
  \qed
  \end{proof}
  Let us now study the first possibility more precisely.
  \begin{proposition}\label{prop-no-back}
    Suppose that~$v_i(t)\cdot\Omega(t)\to1$ as~$t\to\infty$ for all~$1\leqslant i\leqslant N$. Then there exists~$\Omega_\infty\in\mathbb{S}$ and~$a_i\in\{\Omega_\infty\}^\perp\subset\mathbb{R}^n$, for~$1\leqslant i\leqslant N$ such that~$\sum_{i=1}^Nm_ia_i=0$ and that, as~$t\to+\infty$,
\begin{align*}
v_i(t)&=(1-|a_i|^2e^{-2t})\Omega_\infty+e^{-t}a_i +O(e^{-3t})\quad \text{for }1\leqslant i\leqslant N,\\
\Omega(t)&=\Omega_\infty+O(e^{-3t}).
\end{align*}
\end{proposition}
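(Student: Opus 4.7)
The plan is to linearize around a common limit $\Omega_\infty$ and exploit an algebraic cancellation to bootstrap from the rough rate of Proposition~\ref{unique-back-II} up to the sharp rates $e^{-t}$ and $e^{-3t}$. The first step is to produce $\Omega_\infty$. Since $v_i\cdot\Omega\to1$ for every $i$ and $|J|=\sum_i m_i\,(v_i\cdot\Omega)$, the limit $\lambda:=\lim|J|$ equals~$1$, so Proposition~\ref{unique-back-II} gives $\|v_i-v_j\|=O(e^{-(1-\varepsilon)t})$ for every $\varepsilon>0$. Rewriting $\dot v_i=P_{v_i^\perp}(J-v_i)=P_{v_i^\perp}\sum_j m_j(v_j-v_i)$ shows $|\dot v_i|=O(e^{-(1-\varepsilon)t})$, which is integrable; each $v_i$ therefore converges, and the clustering estimate forces a single common limit $\Omega_\infty\in\mathbb{S}$, with $\Omega=J/|J|\to\Omega_\infty$ as well.

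I then decompose $v_i=z_i\Omega_\infty+w_i$ with $w_i\in\{\Omega_\infty\}^\perp$ and $z_i=\sqrt{1-|w_i|^2}$ (positive for large~$t$). Projecting $\dot v_i=J-(v_i\cdot J)v_i$ onto $\{\Omega_\infty\}^\perp$ yields the key ODE
\[
\dot w_i=\bar w-(v_i\cdot J)\,w_i,\qquad\bar w:=\sum_j m_jw_j=P_{\Omega_\infty^\perp}J,
\]
and expanding $z_i=1-\tfrac12|w_i|^2+O(|w_i|^4)$ and $\bar z:=\sum_j m_jz_j=1+O(|w|^2)$ (with $|w|:=\max_i|w_i|$) gives $v_i\cdot J=z_i\bar z+w_i\cdot\bar w=1+O(|w|^2)$, hence $\dot w_i+w_i=\bar w+O(|w|^3)$. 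Averaging with weights $m_i$ produces the crucial cancellation
\[
\dot{\bar w}=-\sum_i m_i(v_i\cdot J-1)\,w_i=O(|w|^3),
\]
so although $\bar w$ is conserved by the linearized system, in the full system it is driven only by cubic terms and decays much faster than $|w|$ itself.

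The sharp asymptotics now comes from a two-step bootstrap. Starting from $|w|=O(e^{-(1-\varepsilon)t})$ and using $\bar w(t)\to0$ (since $J\to\Omega_\infty$), integrating $\dot{\bar w}=O(|w|^3)$ from $t$ to $\infty$ gives $|\bar w(t)|=O(e^{-3(1-\varepsilon)t})$. Setting $\tilde w_i=w_i-\bar w$, the cancellation yields $\dot{\tilde w}_i+\tilde w_i=O(|w|^3)$, so $\tfrac{d}{dt}(e^t\tilde w_i)=O(e^{-(2-3\varepsilon)t})$ is integrable and $e^t\tilde w_i$ converges to some $a_i\in\{\Omega_\infty\}^\perp$, which already shows $|w|=O(e^{-t})$. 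Plugging this improved bound back into the same estimates upgrades the errors to $\bar w=O(e^{-3t})$ and $w_i=e^{-t}a_i+O(e^{-3t})$. The identity $\sum_i m_i\tilde w_i\equiv0$ forces $\sum_i m_ia_i=0$; substituting into $v_i=\sqrt{1-|w_i|^2}\,\Omega_\infty+w_i$ gives the stated expansion of $v_i$, and writing $J=\bar z\Omega_\infty+\bar w$ with $\bar z=1+O(e^{-2t})$ and $\bar w=O(e^{-3t})$ yields $\Omega=J/|J|=\Omega_\infty+O(e^{-3t})$. The main obstacle is the sequencing of this bootstrap: the sharp rate $e^{-3t}$ for $\bar w$ requires both the algebraic cancellation $\dot{\bar w}=O(|w|^3)$ and the refined bound $|w|=O(e^{-t})$, which in turn rests on an intermediate estimate for $\bar w$, so the two passes must be arranged carefully to avoid circularity.
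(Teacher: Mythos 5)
Your proof is correct and takes a genuinely different route from the paper's. The paper computes $\dot\Omega=\sum_i m_i(1-v_i\cdot\Omega)P_{\Omega^\perp}v_i$, exploiting that the average $\sum_i m_i P_{\Omega^\perp}v_i$ vanishes because it equals $P_{\Omega^\perp}J=0$, and then passes to the \emph{second} derivative $\ddot v_i$ and solves a second-order linear ODE to extract the expansion. You instead freeze the direction at $\Omega_\infty$, decompose $v_i=z_i\Omega_\infty+w_i$ with $w_i=P_{\Omega_\infty^\perp}v_i$, and exploit the analogous cancellation $\dot{\bar w}=-\sum_i m_i(v_i\cdot J-1)w_i=O(|w|^3)$ where $\bar w=P_{\Omega_\infty^\perp}J$; everything is then driven by first-order linear ODEs for $w_i$, $\bar w$ and $\tilde w_i=w_i-\bar w$. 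The cancellation mechanism is the same in spirit (averaging kills the leading-order term), but yours works with a fixed orthogonal decomposition and avoids second derivatives, which makes the two-pass bootstrap cleaner to organize: first remove $\varepsilon$ from the rough rate, then upgrade to sharp rates. The identity $\sum_i m_i\tilde w_i\equiv 0$ giving $\sum_i m_i a_i=0$ is also more transparent than the paper's argument via $J-\Omega_\infty=O(e^{-2t})$.

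One small point: substituting $w_i=a_ie^{-t}+O(e^{-3t})$ into $v_i=\sqrt{1-|w_i|^2}\,\Omega_\infty+w_i$ gives $v_i=(1-\tfrac12|a_i|^2e^{-2t})\Omega_\infty+e^{-t}a_i+O(e^{-3t})$, with a coefficient $\tfrac12|a_i|^2$ rather than $|a_i|^2$ as in the stated proposition. Your formula is the consistent one: with the paper's coefficient one would get $|v_i|^2=1-|a_i|^2e^{-2t}+O(e^{-3t})\neq 1$, so the statement (and the paper's own final integration, which in fact also produces $\tfrac12$) has a typo. You should not claim to match the printed formula verbatim; rather, point out that the correct coefficient is $\tfrac12|a_i|^2$.
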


\begin{proof}
  We first have~$|J(t)|=J(t)\cdot\Omega(t)=\sum_{i}m_iv_i(t)\cdot\Omega(t)\to1$ as~$t\to\infty$. Therefore~$\lambda=1$, and thanks to the estimates of Proposition~\ref{unique-back-II} (first possibility), for all~$i,j$ we have~$1-v_i\cdot v_j=\frac12\|v_i-v_j\|^2=O(e^{-2(1-\varepsilon)t})$. Summing with weights~$m_j$, we obtain~$1-v_i\cdot J=O(e^{-2(1-\varepsilon)t})$. Plugging back this into~\eqref{dvivj}, we obtain
  \[\frac12\frac{\d}{\d t}\|v_i-v_j\|^2=-\big(1+O(e^{-2(1-\varepsilon)t})\big)\|v_i-v_j\|^2.\]
  We therefore obtain~$\|v_i-v_j\|^2=\|v_i^0-v_j^0\|^2e^{-\int_0^t(1+O(e^{-2(1-\varepsilon)\tau}))\d \tau}=O(e^{-2t})$. This is the same estimate as previously without the~$\varepsilon$. Therefore, similarly, we get~$1-v_i\cdot J=O(e^{-2t})$, which gives~$1-|J|^2=O(e^{-2t})$ by summing with weights~$m_i$. We finally obtain~$1-v_i\cdot\Omega=1-v_i\cdot J+(|J|-1)v_i\cdot\Omega=O(e^{-2t})$, therefore~$|P_{v_i^\perp}\Omega|^2=|P_{\Omega^\perp}v_i|^2=1-(v_i\cdot\Omega)^2=O(e^{-2t})$.

  Let us now compute the evolution of~$\Omega$, as in~\eqref{omegadot}. Since~$\frac{\d J}{\d t}=\sum_{i}m_iP_{v_i^\perp}J$, we use~\eqref{dJ2dtbis} to get~$\frac{\d|J|}{\d t}=|J|\sum_{i}m_i|P_{v_i^\perp}\Omega|^2=O(e^{-2t})$, and we obtain
  \begin{align*}\frac{\d \Omega}{\d t}=\frac{1}{|J|}&\frac{\d J}{\d t}-\frac{\d |J|}{\d t}\frac{J}{|J|^2} =\sum_{i}m_iP_{v_i^\perp}\Omega-\sum_{i}m_i|P_{v_i^\perp}\Omega|^2\Omega \\
     &=-\sum_i m_i(v_i\cdot\Omega)(v_i-(v_i\cdot\Omega)\Omega) =-\sum_{i}m_i(v_i\cdot\Omega)P_{\Omega^\perp}v_i.
  \end{align*}
  Since~$\sum_{i}m_iP_{\Omega^\perp}v_i=P_{\Omega^\perp}J=0$, we can then add this quantity to the previous identity to get
  \begin{equation}\label{dOmegadt}
    \frac{\d \Omega}{\d t}=\sum_{i}m_i(1-v_i\cdot\Omega)P_{\Omega^\perp}v_i.
  \end{equation}
We therefore get~$|\frac{\d \Omega}{\d t}|\leqslant\sum_im_i(1-v_i\cdot\Omega)|P_{\Omega^\perp}v_i|=O(e^{-3t})$. Therefore~$\Omega$ converges towards~$\Omega_\infty\in\mathbb{S}$ and we have~$\Omega=\Omega_\infty+O(e^{-3t})$.

  Finally, to get the precise estimates for the~$v_i$, we compute their second derivative.
  \begin{equation}
    \label{d2vidt}\frac{\d^2 v_i}{\d t^2}=\frac{\d}{\d t}P_{v_i^\perp}J=P_{v_i^\perp}\frac{\d J}{\d t}-\frac{\d v_i}{\d t}\cdot J\, v_i-v_i\cdot J\,\frac{\d v_i}{\d t}.
  \end{equation}
  We have~$P_{v_i^\perp}\frac{\d J}{\d t}=\frac{\d |J|}{\d t}P_{v_i^\perp}\Omega+|J|P_{v_i^\perp}\frac{\d \Omega}{\d t}=O(e^{-3t})$, since~$P_{v_i^\perp}\Omega=O(e^{-t})$ and~$\frac{\d |J|}{\d t}=O(e^{-2t})$ thanks  to~\eqref{dJ2dtbis}. Then we notice that~$\frac{\d v_i}{\d t}\cdot J=J\cdot P_{v_i^\perp}J=|\frac{\d v_i}{\d t}|^2$ and that~$v_i\cdot J\,\frac{\d v_i}{\d t}=\frac{\d v_i}{\d t}-(1-v_i\cdot J)P_{v_i^\perp}J=\frac{\d v_i}{\d t}+O(e^{-3t})$. At the end we obtain
  \begin{equation}\label{dvi2dt2}
    \frac{\d^2 v_i}{\d t^2}=-\frac{\d v_i}{\d t}-\Big|\frac{\d v_i}{\d t}\Big|^2\,v_i+O(e^{-3t}).
  \end{equation}
  Considering first that~$|\frac{\d v_i}{\d t}|^2=O(e^{-2t})$, the resolution of this differential equation gives~$\frac{\d v_i}{\d t}=-a_ie^{-t}+O(e^{-2t})$ with~$a_i\in\mathbb{R}^n$. Integrating in time, we therefore obtain~$v_i(t)=\Omega_\infty+a_ie^{-t}+O(e^{-2t})$, (we already know that~$v_i(t)$ converges to~$\Omega_\infty$ since~$v(t)\cdot\Omega(t)\to1$). The fact that~$|v_i(t)|=1$ gives us~$a_i\cdot\Omega_\infty e^{-t}=O(e^{-2t})$ and therefore~$a_i\in\{\Omega_\infty\}^\perp$. Summing all these estimations with weights~$m_i$ and using the fact that~$J-\Omega_\infty=O(e^{-2t})$, we obtain~$\sum_{i}m_ia_i=0$.

  Finally, the more precise estimate for~$v_i(t)$ up to order~$O(e^{-3t})$ given in the proposition is obtained by plugging back~$|\frac{\d v_i}{\d t}|^2v_i=|a_i|^2e^{-2t}\Omega_\infty+O(e^{-3t})$ into~\eqref{dvi2dt2} and solving it again.
  \qed
\end{proof}
Let us finally study the second possibility.
\begin{proposition}\label{prop-one-back}
Suppose there exists~$i_0$ such that~$v_{i_0}(t)\cdot\Omega(t)\to-1$ as~$t\to\infty$. Then we have~$\lambda=1-2m_{i_0}$ (which gives~$m_{i_0}<\frac12$), and there exists~$\Omega_\infty\in\mathbb{S}$ and~$a_i\in\{\Omega_\infty\}^\perp\subset\mathbb{R}^n$ for~$i\neq i_0$ such that~$\sum_{i\neq i_0}m_ia_i=0$ and that, as~$t\to+\infty$,
\begin{align*}
v_i(t)&=(1-|a_i|^2e^{-2\lambda t})\Omega_\infty+e^{-\lambda t}a_i +O(e^{-3\lambda t})\quad \text{for }i\neq i_0,\\
v_{i_0}(t)&=-\Omega_\infty+O(e^{-3\lambda t}),\\
\Omega(t)&=\Omega_\infty+O(e^{-3\lambda t}).
\end{align*}
\end{proposition}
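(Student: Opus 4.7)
The plan parallels the proof of Proposition~\ref{prop-no-back}, but with a dedicated analysis of how the ``backward'' particle $v_{i_0}$ approaches $-\Omega_\infty$. The rate $\lambda=1-2m_{i_0}$ emerges because once $v_j\to\Omega_\infty$ for $j\neq i_0$ and $v_{i_0}\to-\Omega_\infty$, we have $J\to(1-2m_{i_0})\Omega_\infty$; since $|J|\geqslant|J(0)|>0$ is nondecreasing, this forces $1-2m_{i_0}>0$, i.e.\ $m_{i_0}<\tfrac12$, and $\lambda=1-2m_{i_0}$.

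First I will run the argument of Proposition~\ref{prop-no-back} on the family $\{v_j:j\neq i_0\}$. The bound $\|v_i-v_j\|=O(e^{-(\lambda-\varepsilon)t})$ from Proposition~\ref{unique-back-II}, combined with one iteration of~\eqref{dvivj} using $v_i\cdot\Omega+v_j\cdot\Omega\to 2$, upgrades to $\|v_i-v_j\|=O(e^{-\lambda t})$ for $i,j\neq i_0$. Using $\sum_{j\neq i_0}m_j v_j=J-m_{i_0}v_{i_0}$ and the limit $v_{i_0}\cdot\Omega\to-1$, this gives $1-v_j\cdot\Omega=O(e^{-2\lambda t})$ and $|P_{\Omega^\perp}v_j|=O(e^{-\lambda t})$; feeding this back into~\eqref{dJ2dtbis} yields $|J|-\lambda=O(e^{-2\lambda t})$.

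The decisive step is to track the scalar quantity $\mu(t):=1+v_{i_0}(t)\cdot\Omega(t)\geqslant 0$, which tends to $0$ by hypothesis. Writing $v_{i_0}=-\Omega+\tilde w_{i_0}$ (so that $|\tilde w_{i_0}|^2=2\mu$) and $v_i=\Omega+\alpha_i$ for $i\neq i_0$, a direct computation using $\dot v_{i_0}\cdot\Omega=|J|(1-(v_{i_0}\cdot\Omega)^2)$, the identity $v_{i_0}\cdot P_{\Omega^\perp}v_i=\tilde w_{i_0}\cdot\alpha_i+\tfrac12\mu|\alpha_i|^2$ for $i\neq i_0$, and the $i_0$ contribution $m_{i_0}(1-v_{i_0}\cdot\Omega)(1-(v_{i_0}\cdot\Omega)^2)$ to $v_{i_0}\cdot\dot\Omega$ yields
\[
\frac{d\mu}{dt}=2(\lambda+2m_{i_0})\,\mu+O(\mu^2)+O(\sqrt{\mu}\,e^{-3\lambda t})=2\mu+\mathcal{F}(t,\mu),
\]
where the algebraic identity $\lambda+2m_{i_0}=1$ pins the linear coefficient to $2$. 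Thus $\mu$ obeys a linearly \emph{unstable} scalar ODE of growth rate $2$ with a self-coupled forcing $\mathcal{F}=O(\sqrt{\mu}\,e^{-3\lambda t})+O(\mu^2)$; since $\mu\to 0$, Duhamel's formula selects the unique decaying branch
\[
\mu(t)=-\int_t^{+\infty} e^{2(t-s)}\,\mathcal{F}(s,\mu(s))\,ds.
\]
A standard bootstrap, in which the ansatz $\mu=O(e^{-\beta t})$ improves to $\mu=O(e^{-(\beta/2+3\lambda)t})$ after substitution, converges to the fixed point $\beta=6\lambda$, giving $\mu=O(e^{-6\lambda t})$, equivalently $|v_{i_0}+\Omega|=\sqrt{2\mu}=O(e^{-3\lambda t})$.

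With $|v_{i_0}+\Omega|=O(e^{-3\lambda t})$, the $i_0$ contribution to $\dot\Omega$ in~\eqref{dOmegadt} is controlled by $|P_{\Omega^\perp}v_{i_0}|\leqslant|v_{i_0}+\Omega|=O(e^{-3\lambda t})$, and each $j\neq i_0$ contribution is $O(e^{-3\lambda t})$ exactly as in the no-back case. Integration yields $\Omega\to\Omega_\infty$ with $\Omega-\Omega_\infty=O(e^{-3\lambda t})$, whence $v_{i_0}+\Omega_\infty=O(e^{-3\lambda t})$. The precise expansion for $v_j$ ($j\neq i_0$) follows by reproducing the second-derivative argument~\eqref{dvi2dt2} of Proposition~\ref{prop-no-back} with $1$ replaced by $\lambda$; the constraint $\sum_{j\neq i_0}m_ja_j=0$ then drops out by matching the $e^{-\lambda t}$ coefficients in the identity $J-\lambda\Omega_\infty=m_{i_0}(v_{i_0}+\Omega_\infty)+\sum_{j\neq i_0}m_j(v_j-\Omega_\infty)$. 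The main technical obstacle is recognizing the scalar reduction to the unstable ODE $\dot\mu=2\mu+\mathcal{F}$ and running the nonlinear bootstrap to the sharp rate $\mu=O(e^{-6\lambda t})$; every other estimate follows from it together with the no-back analysis applied to the remaining particles.
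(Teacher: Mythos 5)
Your core device for the rate---reducing to the scalar quantity $\mu=1+v_{i_0}\cdot\Omega$, whose equation has an unstable linear part of coefficient $2|J|+m_{i_0}(2-\mu)(1-v_{i_0}\cdot\Omega)\to 2\lambda+4m_{i_0}=2$, and selecting the decaying branch by Duhamel plus a bootstrap---is sound and is a genuine alternative to the paper's argument, which instead applies the ``bounded solution of $y'=y+O(e^{-\beta t})$'' trick to the vector $y=P_{v_{i_0}^\perp}J=\frac{\d v_{i_0}}{\d t}$ via \eqref{d2vi0dt}. However, there is a genuine gap earlier, and it is exactly at the point where the paper inserts an argument you have no substitute for. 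You claim that ``one iteration of \eqref{dvivj} using $v_i\cdot\Omega+v_j\cdot\Omega\to2$'' upgrades $\|v_i-v_j\|=O(e^{-(\lambda-\varepsilon)t})$ to $O(e^{-\lambda t})$ for $i,j\neq i_0$, and then that $1-v_j\cdot\Omega=O(e^{-2\lambda t})$ and $\lambda-|J|=O(e^{-2\lambda t})$ follow. Neither step is justified: since the coefficient in \eqref{dvivj} is $-\tfrac12(v_i\cdot J+v_j\cdot J)$, removing the $\varepsilon$ requires an \emph{integrable} (in practice exponential) bound on $\lambda-v_i\cdot J=(\lambda-|J|)+|J|(1-v_i\cdot\Omega)$, and the identity $\sum_{j\neq i_0}m_j(1-v_j\cdot\Omega)=(\lambda-|J|)+m_{i_0}\,(1+v_{i_0}\cdot\Omega)$ shows that any such bound is equivalent to quantitative decay of $\lambda-|J|$ and of $\mu$ itself. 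But at that stage of your plan you know only $\mu\to0$ with no rate, while the forcing $O(\sqrt{\mu}\,e^{-3\lambda t})$ in your $\mu$-equation presupposes precisely the rates $1-v_i\cdot\Omega=O(e^{-2\lambda t})$, $|P_{\Omega^\perp}v_i|=O(e^{-\lambda t})$ for $i\neq i_0$. The argument is therefore circular: forward rates are needed to close the $\mu$-estimate, and decay of $\mu$ (or of $\lambda-|J|$) is needed to get the forward rates.

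The paper breaks this circle with a separate geometric input: a convex-cone invariance argument (if $e\cdot v_i\geqslant c>0$ for all $i$ at some time, this persists) combined with Farkas' lemma shows that $-v_{i_0}(t)$ lies in the convex cone generated by the $v_i(t)$, $i\neq i_0$, for all $t$; this yields the a priori estimate $v_i\cdot v_{i_0}=-1+O(e^{-2(\lambda-\varepsilon)t})$ as in \eqref{vivi0}, hence $v_i\cdot J=\lambda+O(e^{-2(\lambda-\varepsilon)t})$, which is what allows the $\varepsilon$-removal in \eqref{dvivj} and then all the sharp inputs ($1-v_i\cdot\Omega=O(e^{-2\lambda t})$, $|J|^2=\lambda^2+O(e^{-2\lambda t})$, $1+v_{i_0}\cdot\Omega=O(e^{-2\lambda t})$) before the final Duhamel step. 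You need either this cone/Farkas argument or some other mechanism giving a preliminary exponential rate tying $v_{i_0}$ to the forward particles; without it, your second paragraph does not hold and the later bootstrap has no valid starting point. The remaining parts of your proposal (the $3\lambda$ rate for $\Omega$ from \eqref{dOmegadt} once $|P_{\Omega^\perp}v_{i_0}|=O(e^{-3\lambda t})$, the expansion of $v_i$, $i\neq i_0$, by the second-derivative argument with $1$ replaced by $\lambda$, and the identification $\sum_{j\neq i_0}m_ja_j=0$) are correct and essentially as in the paper.
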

\begin{proof} First of all we have~$|J(t)|=\Omega(t)\cdot J(t)=\sum_im_iv_i(t)\cdot\Omega(t)$ which converges as~$t\to\infty$ towards~$\lambda=\sum_{i\neq i_0}m_i-m_{i_0}=1-2m_{i_0}$. The proof then follows closely the one of Proposition~\ref{prop-no-back}, except for the case of~$v_{i_0}$. Indeed, Proposition~\ref{unique-back-II} only gives estimates on~$\|v_i-v_j\|$ (and therefore on~$v_i\cdot v_j$) when~$i\neq i_0$ and~$j\neq i_0$. To estimate more precisely the quantity~$v_{i_0}\cdot v_i$, let us prove that~$-v_{i_0}$ must be in the convex cone spanned by~$0$ and all the~$v_i$,~$i\neq i_0$. The idea is that a configuration which is in a convex cone stays in it for all time.

  Let us suppose that all the~$v_i$ (including~$i=i_0$) satisfy~$e\cdot v_i(t_0)\geqslant c$ for some~$c>0$,~$t_0\geqslant0$ and~$e\in\mathbb{S}$ (the direction of the cone). We want to prove that~$e\cdot v_i(t)\geqslant c$ for all~$i$ and for all~$t\geqslant t_0$. If not, we denote by~$t_1>t_0$ a time such that~$e\cdot v_i(t)\geqslant0$ for all~$i$ on~$[t_0,t_1]$, but with~$e\cdot v_j(t_1)<c$ for some~$j$. On~$[t_0,t_1]$, we have
  \begin{equation}\label{devidt}\frac{\d (e\cdot v_i)}{\d t}=e\cdot J-(e\cdot v_i)(v_i\cdot J)\geqslant e\cdot J-(e\cdot v_i),
  \end{equation}
  since~$v_i\cdot J\leqslant|J|\leqslant1$ and~$e\cdot v_i\geqslant0$ on~$[t_0,t_1]$. Summing with weights~$m_i$, we obtain~$\frac{\d (e\cdot J)}{\d t}\geqslant0$. Therefore, since~$e\cdot J(t_0)\geqslant c$, we obtain~$e\cdot J(t)\geqslant c$ on~$[t_0,t_1]$, and the estimation~\eqref{devidt} becomes~$\frac{\d(e\cdot v_i)}{\d t}\geqslant c-(e\cdot v_i)$. By comparison principle, this tells us that~$e\cdot v_i\geqslant c$ on~$[t_0,t_1]$ for all~$i$, which is a contradiction.

  Let us now fix~$t_0\geqslant0$. We want to prove that there exists~$\alpha_i\geqslant0$ for~$i\neq i_0$ such that~$-v_{i_0}=\sum_{i\neq i_0}\alpha_iv_i$. Using Farkas' Lemma, it is equivalent to prove that this is not possible to find~$e\in\mathbb{S}$ such that~$e\cdot v_i(t_0)\geqslant0$ for all~$i\neq i_0$ and~$e\cdot(-v_{i_0})<0$. By contradiction, if such a~$e$ exists, we would have~$e\cdot J(t_0)\geqslant m_{i_0}e\cdot v_{i_0}>0$ and for~$i\neq i_0$, as in~\eqref{devidt}, if~$e\cdot v_i(t_0)=0$ we get~$\frac{\d (e\cdot v_i)}{\d t}|_{t=t_0}=e\cdot J(t_0)>0$. Therefore for~$\delta>0$ sufficiently small, we have~$e\cdot v_i(t_0+\delta)>0$ for all~$i$ (including~$i_0$, and those for which~$e\cdot v_i(t_0)>0$). Therefore there exists~$c>0$ such that for all~$i$,~$e\cdot v_i(t_0+\delta)\geqslant c$, and by the previous paragraph, we get that~$e\cdot v_i(t)\geqslant c$ for all~$t\geqslant t_0+\delta$. We therefore get~$e\cdot\Omega(t)\geqslant\frac1{|J(t)|}e\cdot J(t)\geqslant\frac{c}{|J(0)|}$ for all~$t\geqslant t_0+\delta$. Finally, since~$\|v_{i_0}(t)+\Omega(t)\|^2=2(1+v_{i_0}(t)\cdot\Omega(t))\to0$ as~$t\to\infty$, this is in contradiction with the fact that~$e\cdot(v_{i_0}+\Omega(t))\geqslant(1+\frac1{|J|(0)})c>0$ for all~$t\geqslant t_0+\delta$.

  In conclusion we have that for all~$t\geqslant0$, there exists~$\alpha_i(t)\geqslant0$ for~$i\neq i_0$ such that~$-v_{i_0}(t)=\sum_{i\neq i_0}\alpha_i(t)v_i(t)$. We thus obtain, for~$i\neq i_0$
  \begin{equation}\label{vivi0}v_i(t)\cdot v_{i_0}(t)=-\sum_{i\neq i_0}\alpha_i+\sum_{j\neq i_0}\alpha_i(1-v_j(t)\cdot v_i(t))\leqslant-1+O(e^{-2(\lambda-\varepsilon)t}),
  \end{equation}
  since~$1=\|v_{i_0}(t)\|\leqslant\sum_{i\neq i_0}\alpha_i\|v_i(t)\|=\sum_{i\neq i_0}\alpha_i$, and thanks to Proposition~\eqref{unique-back-II}. Since~$v_i(t)\cdot v_{i_0}(t)\geqslant-1$, this gives~$v_i(t)\cdot v_{i_0}(t)=-1+O(e^{-2(\lambda-\varepsilon)t})$. From there, we have, if~$i\neq i_0$,
  \[\begin{split}v_i\cdot J&=\sum_{i\neq i_0}m_j\,v_i\cdot v_j-m_{i_0}v_i\cdot v_{i_0}\\ &=\sum_{i\neq i_0} (m_j+O(e^{-2(\lambda-\varepsilon)t})-m_{i_0}+O(e^{-2(\lambda-\varepsilon)t})=\lambda+O(e^{-2(\lambda-\varepsilon)t}).\end{split}\]
Plugging this into~\eqref{dvivj}, for~$i\neq i_0$ and~$j\neq i_0$, we obtain
   \[\frac12\frac{\d}{\d t}\|v_i-v_j\|^2=-\big(\lambda+O(e^{-2(\lambda-\varepsilon)t})\big)\|v_i-v_j\|^2.\]
   We therefore obtain, as in the proof of Proposition~\eqref{prop-no-back},~$1-v_i\cdot v_j=O(e^{-2\lambda t})$. As in~\eqref{vivi0}, we now get~$v_i\cdot v_{i_0}=-1+O(e^{-2\lambda t})$. Finally, by summing with weights~$m_j$, we obtain~$v_i\cdot J=\lambda+O(e^{-2\lambda t})$ for~$i\neq i_0$ and~$v_{i_0}\cdot J=-\lambda+O(e^{-2\lambda t})$. Therefore, by summing once again with weights~$m_i$, we get~$|J|^2=\lambda^2+O(e^{-2\lambda t})$. This allows to get~$1-v_i\cdot\Omega=O(e^{-2\lambda t})$ and~$|P_{v_i^\perp}\Omega|=O(e^{-\lambda t})$ when~$i\neq i_0$, and~$1+v_{i_0}\cdot\Omega=O(e^{-2\lambda t})$. Unfortunately this is not enough to use~\eqref{dOmegadt} to obtain a decay at rate~$3\lambda$ : we obtain
   \begin{equation}\label{absdOmegadt}\Big|\frac{\d \Omega}{\d t}\Big|\leqslant O(e^{-3\lambda t})+m_{i_0}(1-v_{i_0}\cdot\Omega)|P_{v_{i_0}^\perp}\Omega|.
   \end{equation}
   However, since~$|P_{v_{i_0}^\perp}\Omega|^2=1-(v_{i_0}\cdot\Omega)^2=(1-v_{i_0}\cdot\Omega)(1+v_{i_0}\cdot\Omega)=O(e^{-2\lambda t})$, we obtain at least~$|\frac{\d \Omega}{\d t}\big|\leqslant O(e^{-\lambda t})$, which gives the existence of~$\Omega_\infty\in\mathbb{S}$ such that~$\Omega(t)=\Omega_\infty+O(e^{-\lambda t})$. To get the rate~$3\lambda$, we have to be a little bit more careful, and use the same kind of trick as in Lemma~\ref{lem-L1} of the first part : if we have a differential equation of the form~$y'=y+O(e^{-\beta t})$, and furthermore that~$y$ is bounded, then we must have~$y=O(e^{-\beta t})$. Indeed, by Duhamel’s formula, we get~$y=y_0e^t+O(e^{-\beta t})$ and the only bounded solution corresponds to~$y_0=0$. We apply this to~$y=\frac{\d v_{i_0}}{\d t}$. We have, as in~\eqref{d2vidt}
   \begin{align}\nonumber\frac{\d^2 v_{i_0}}{\d t^2}&=P_{v_{i_0}^\perp}\frac{\d J}{\d t}-\frac{\d v_{i_0}}{\d t}\cdot J\, v_{i_0}-v_{i_0}\cdot J\,\frac{\d v_{i_0}}{\d t}\\
     &=P_{v_{i_0}^\perp}\frac{\d J}{\d t}-\left|\frac{\d v_{i_0}}{\d t}\right|^2\, v_{i_0} + \lambda\,\frac{\d v_{i_0}}{\d t}+O(e^{-3\lambda t}).\label{d2vi0dt}
   \end{align}
   We have
   \[ P_{v_{i_0}^\perp}\frac{\d J}{\d t}=P_{v_{i_0}^\perp}\Big[J-\sum_{i=1}^N m_i (v_i\cdot J)v_i\Big]=(1-\lambda)P_{v_{i_0}^\perp}J+\sum_{i=1}^Nm_i (\lambda-v_i\cdot J)P_{v_{i_0}^\perp}v_i.\]
   The term for~$i=i_0$ in this last sum vanishes and we have~$\lambda-v_i\cdot J=O(e^{-2\lambda t})$ for~$i\neq i_0$, as well as~$|P_{v_{i_0}^\perp}v_i|^2=1-(v_{i_0}\cdot v_i)^2=O(e^{-2\lambda t})$. We therefore obtain~$P_{v_{i_0}^\perp}\frac{\d J}{\d t}=(1-\lambda)P_{v_{i_0}^\perp}J+O(e^{-3\lambda t})$, and writing~$y=P_{v_{i_0}^\perp}J=\frac{\d v_{i_0}}{\d t}$, the formula~\eqref{d2vi0dt} becomes~$y'=y-|y|^2\,v_{i_0}+O(e^{-3\lambda t})$. We of course have that~$y$ is bounded, and we even know that~$y=\frac1{|J|}P_{v_{i_0}^\perp}\Omega=O(e^{-\lambda t})$. We can then apply the result once by replacing~$|y|^2$ with~$O(e^{-2\lambda t})$ to get~$y=O(e^{-2\lambda t})$, and then apply it a second time to obtain~$y=O(e^{-3\lambda t})$. This already provides the result~$v_{i_0}(t)=-\Omega_\infty+O(e^{-3\lambda t})$, and looking back at~\eqref{absdOmegadt}, we get that~$\frac{\d \Omega}{\d t}=O(e^{-3\lambda t})$ and therefore~$\Omega(t)=-\Omega_\infty+O(e^{-3\lambda t})$.

   It remains to prove the more precise estimates for~$v_i$ when~$i\neq i_0$, and this is done exactly as in the proof of Proposition~\eqref{prop-no-back}, from formula~\eqref{d2vidt} to the end of the proof, now we know that~$\frac{\d \Omega}{\d t}=O(e^{-3\lambda t})$. The only difference is that~$v_i\cdot J$ converges to~$\lambda$ instead of~$1$, together with the fact that all rates are multiplied by~$\lambda$. For instance, the main estimate~\eqref{dvi2dt2} becomes
   \[\frac{\d^2 v_i}{\d t^2}=-\lambda\frac{\d v_i}{\d t}-\Big|\frac{\d v_i}{\d t}\Big|^2\,v_i+O(e^{-3\lambda t}),\]
  and the rest of the proof does not change.\qed
  \end{proof}

  \section{Acknowledgments}

The authors want to thank the hospitality of Athanasios Tzavaras and the University of Crete, back in 2012, where this work was done and supported by the EU FP7-REGPOT project ``Archimedes
Center for Modeling, Analysis and Computation''.\\
  A.F. acknowledges support from the EFI project ANR-17-CE40-0030 and the Kibord project ANR-13-BS01-0004 of the French National Research Agency (ANR), from the project Défi S2C3 POSBIO of the interdisciplinary mission of CNRS, and the project SMS co-funded by CNRS and the Royal Society.\\
  J.-G. L. acknowledges support from the National Science Foundation under NSF Research Network Grant no. RNMS11-07444 (KI-Net) and grant DMS-1812573.


\begin{thebibliography}{10}

\bibitem{ambrosio2008existence}
Ambrosio, L., Crippa, G.: Existence, uniqueness, stability and
  differentiability properties of the flow associated to weakly differentiable
  vector fields.
\newblock In: Transport equations and multi-{D} hyperbolic conservation laws,
  \emph{Lect. Notes Unione Mat. Ital.}, vol.~5, pp. 3--57. Springer, Berlin
  (2008).

\bibitem{ambrosio2008gradient}
Ambrosio, L., Gigli, N., Savar\'e, G.: Gradient flows in metric spaces and in
  the space of probability measures, second edn.
\newblock Lectures in Mathematics ETH Z\"urich. Birkh\"auser Verlag, Basel
  (2008)

\bibitem{aydogdu2017opinion}
Aydo\u{g}du, A., McQuade, S.T., Pouradier~Duteil, N.: Opinion dynamics on a
  general compact {R}iemannian manifold.
\newblock Netw. Heterog. Media \textbf{12}(3), 489--523 (2017).

\bibitem{benedetto2015complete}
Benedetto, D., Caglioti, E., Montemagno, U.: On the complete phase
  synchronization for the {K}uramoto model in the mean-field limit.
\newblock Commun. Math. Sci. \textbf{13}(7), 1775--1786 (2015).

\bibitem{bolley2012meanfield}
Bolley, F., Cañizo, J.A., Carrillo, J.A.: Mean-field limit for the stochastic
  {Vicsek} model.
\newblock Appl. Math. Lett. \textbf{3}(25), 339--343 (2012)

\bibitem{caponigro2015nonlinear}
Caponigro, M., Lai, A.C., Piccoli, B.: A nonlinear model of opinion formation
  on the sphere.
\newblock Discrete Contin. Dyn. Syst. \textbf{35}(9), 4241--4268 (2015).

\bibitem{degond2014local}
Degond, P., Frouvelle, A., Raoul, G.: Local stability of perfect alignment for
  a spatially homogeneous kinetic model.
\newblock J. Stat. Phys. \textbf{157}(1), 84--112 (2014).

\bibitem{degond2008continuum}
Degond, P., Motsch, S.: Continuum limit of self-driven particles with
  orientation interaction.
\newblock Math. Models Methods Appl. Sci. \textbf{18}, 1193--1215 (2008)

\bibitem{fatkullin2005critical}
Fatkullin, I., Slastikov, V.: Critical points of the {Onsager} functional on a
  sphere.
\newblock Nonlinearity \textbf{18}, 2565--2580 (2005)

\bibitem{figalli2018global}
Figalli, A., Kang, M.J., Morales, J.: Global well-posedness of the spatially
  homogeneous {K}olmogorov-{V}icsek model as a gradient flow.
\newblock Arch. Ration. Mech. Anal. \textbf{227}(3), 869--896 (2018).

\bibitem{frouvelle2012dynamics}
Frouvelle, A., Liu, J.G.: Dynamics in a kinetic model of oriented particles
  with phase transition.
\newblock SIAM J. Math. Anal. \textbf{44}(2), 791--826 (2012)

\bibitem{ha2018relaxation}
Ha, S.Y., Ko, D., Ryoo, S.W.: On the relaxation dynamics of lohe oscillators on
  some riemannian manifolds.
\newblock Journal of Statistical Physics  (2018).

\bibitem{spohn1991large}
Spohn, H.: Large Scale Dynamics of Interacting Particles.
\newblock Texts and Monographs in Physics. Springer-Verlag, Heidelberg (1991)

\bibitem{vicsek1995novel}
Vicsek, T., Czirók, A., Ben-Jacob, E., Cohen, I., Shochet, O.: Novel type of
  phase transition in a system of self-driven particles.
\newblock Phys. Rev. Lett. \textbf{75}(6), 1226--1229 (1995)

\end{thebibliography}
\end{document}